
\documentclass[a4paper,abstracton]{scrartcl}
\usepackage{amssymb}
\usepackage{amsmath}
\usepackage{amsthm}
\usepackage{graphicx}
\usepackage{fancyhdr}              
\usepackage{lastpage}              
                                   
\usepackage[shortlabels]{enumitem}
\usepackage{esint}

\usepackage [english]{babel}

\usepackage{setspace}


\pagestyle{fancy}
\lhead{}
\chead{}
\rhead{}
\lfoot{}
\cfoot{\thepage/\pageref{LastPage}} 
\rfoot{}

\newcommand{\setR}{\mathbb{R}}
\newcommand{\setC}{\mathbb{C}}
\newcommand{\setZ}{\mathbb{Z}}
\newcommand{\setQ}{\mathbb{Q}}
\newcommand{\setN}{\mathbb{N}}
\newcommand{\setP}{\mathbb{P}}

\setlength{\parindent}{0mm} 


\newtheorem{definition}{Definition}
\newtheorem{theorem}[definition]{Theorem}
\newtheorem{lemma}[definition]{Lemma}


\newcommand{\divides}{\mid}


\DeclareMathOperator{\Tr}{Tr}

\renewcommand{\pmod}[1]{(\text{mod } #1)}

\begin{document}
\title{Power-Free Values of Polynomials}
\author{Thomas Reuss\\ 
		Mathematical Institute, University of Oxford\\
		reuss@maths.ox.ac.uk}
\maketitle

\begin{abstract}
Let $f$ be an irreducible polynomial of degree $d\geq 3$ with no fixed prime divisor. We derive an asymptotic formula for the number of primes $p\leq x$ such that $f(p)$ is $(d-1)$-free.
\end{abstract}

\renewcommand{\abstractname}{Acknowledgment}
\begin{abstract}
I am very grateful to my supervisor Roger Heath-Brown for many valuable discussions and helpful comments on this paper.\\
I am also very grateful to the EPSRC\footnote{DTG reference number: EP/J500495/1} and to St. Anne's College, Oxford who are generously funding and supporting this project.
\end{abstract}

\section{Introduction}
Let $k$ and $n$ be integers such that $k\geq 2$. Then $n$ is said to be $k$-free if there is no prime
$p$ such that $p^k\divides n$. For an irreducible polynomial $f(x)\in\setZ[x]$ of degree $d$, one expects in general that the set $f(\setZ)=\{f(n),n\in\setZ\}$ contains
infinitely many $k$-free values. This is clearly not true if $f$ has a fixed $k$-th power prime divisor, that is, if there exists a prime $p$ such that $p^k\divides f(n)$ for all $n\in\setZ$. One can conjecture that this is
the only condition under which $f(\setZ)$ fails to contain infinitely many
$k$-free values. In 1933, Ricci \cite{Ricci} proved this conjecture for 
$k\geq d$. In fact, he derived an asymptotic formula for the quantity
\begin{equation}\label{eq:eqquaninbeg}
	\#\{n\leq X: f(n)\text{ is } k\text{-free}\}.
\end{equation}
Further progress was made by Erd\H{o}s \cite{Erdos} who proved the conjecture in the case $k=d-1$ for $d\geq 3$. Later, Hooley \cite{Hooley} derived an
asymptotic formula for each such $k$.\bigskip\\
In \cite{Erdos}, Erd\H{o}s proposed the similar question, whether 
$f(\setP)=\{f(p),p\text{ prime}\}$ contains infinitely many $(d-1)$-free values. Hooley \cite{Hooley2} proved this conjecture for $d\geq 51$. Nair
\cite{Nair,Nair2} further refined this result and 
proved Erd\H{o}s' conjecture for $d\geq 7$. Recently, Helfgott 
\cite{Helfgott1,Helfgott2} has established the conjecture for 
$d=3$ and for all quartic polynomials with sufficiently high entropy.
Finally, Browning \cite{Browning} has settled the conjecture for $d\geq 5$.
Thus, the conjecture remains open for irreducible quartic polynomials with $Gal(f)=A_4$ or $Gal(f)=S_4$.\bigskip\\
In this work, we will settle the conjecture for the remaining cases and establish the following theorem:
\begin{theorem}\label{thm:thm1}
	Let $f(x)\in\setZ[x]$ be an irreducible polynomial of degree $d\geq 3$
	and assume that $f$ has no fixed $(d-1)$-th power prime divisor. Define
	\[
		N'_f(X)=\#\{p\leq X: p\text{ prime}, f(p)\text{ is } 
		(d-1)\text{-free}\}.
	\]
	Then, for any $C>1$, we have
	\[
		N'_f(X)=c'_f\pi(X)+O_{C,f}\left(\frac{X}{(\log X)^C}\right),
	\]
	as $x\rightarrow\infty$, where
	\[
		c'_f=\prod_p\left(1-\frac{\rho'(p^{d-1})}{\phi(p^{d-1})}\right),
	\]
	and
	\[
		\rho'(d)=\#\{n\pmod d: (d,n)=1, d\divides f(n)\}.
	\]
\end{theorem}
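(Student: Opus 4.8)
The plan is to count primes $p \leq X$ for which $p^{d-1} \divides f(p)$ for some prime $p$, using the standard square-free sieve machinery adapted to the prime argument case. Write $N'_f(X) = \pi(X) - \#\{p \leq X : f(p) \text{ is not } (d-1)\text{-free}\}$, and for the complementary count break according to the smallest prime $q$ with $q^{d-1} \divides f(p)$. One partitions the "bad" primes $q$ into ranges: small primes $q \leq \log X$ (or some fixed power), medium primes $\log X < q \leq X^{1/(d-1)}/(\log X)^A$ or so, large primes $X^{1/(d-1)} \leq q \leq X^{\theta}$ for a suitable $\theta$ depending on $d$, and possibly very large primes. For each range a different tool is needed, and summing the error contributions gives the bound $O(X/(\log X)^C)$. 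The main term comes entirely from the small-prime range via inclusion–exclusion, producing the Euler product $c'_f$ analogously to how the classical Hooley/Erdős argument produces the density in \eqref{eq:eqquaninbeg}, with $\phi(p^{d-1})$ in the denominator because $p$ ranges over primes.

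**First I would** handle the small primes $q \leq z$ (with $z = z(X)$ chosen as a small power of $\log X$, or perhaps $z$ fixed growing slowly) by the Prime Number Theorem in arithmetic progressions: for each squarefree modulus composed of such primes, the number of $p \leq X$ with $f(p) \equiv 0 \pmod{q^{d-1}}$ and $(p,q)=1$ is $\frac{\rho'(q^{d-1})}{\phi(q^{d-1})}\pi(X) + O(X \exp(-c\sqrt{\log X}))$ by Siegel–Walfisz, provided the modulus is at most a power of $\log X$. Inclusion–exclusion over these moduli assembles the partial Euler product, and the tail $\prod_{q > z}$ is estimated trivially since $\rho'(q^{d-1}) \ll q^{d-2}$ by Lagrange's theorem (using irreducibility and no fixed prime divisor), making $\rho'(q^{d-1})/\phi(q^{d-1}) \ll 1/q^2$ convergent. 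The medium primes $z < q \leq X^{1/(d-1)}$ are controlled by counting solutions of $f(p) \equiv 0 \pmod{q^{d-1}}$ crudely: the number of such $p \leq X$ is $\ll (X/q^{d-1} + 1)\rho'(q^{d-1}) \ll X/q^2 + q^{d-2}$, and summing over $q > z$ gives $\ll X/z$, which is $\ll X/(\log X)^C$ for appropriate $z$.

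**The hard part will be** the large primes, $q$ in the range roughly $X^{1/(d-1)} \leq q \leq X^{\theta_d}$: here $f(p) \leq X^d$ has at most $O(1)$ prime factors exceeding $X^{1/(d-1)}$, but one cannot afford a trivial count. This is exactly where the results cited in the introduction (Helfgott, Browning, Nair) are needed, and for the remaining quartic cases with $\mathrm{Gal}(f) \in \{A_4, S_4\}$ one presumably invokes a new estimate — perhaps a count of $p \leq X$ with $f(p)$ divisible by $q^{d-1}$ for large $q$, reduced via the substitution $f(p) = q^{d-1} m$ to counting points near a curve or to an application of the determinant method / square sieve combined with bounds for character sums or Selberg's sieve. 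I would first try to reduce to counting representations $f(n) = a b^{d-1}$ with $b$ large and $n$ prime, then apply a Chebotarev-type or large-sieve argument exploiting the Galois group to bound the number of $n$ (not necessarily prime) and finally insert the primality of $p$ either by an upper-bound sieve or by noting primality is a mild extra constraint. The contributions from $q > X^{\theta_d}$, where $q^{d-1}$ alone exceeds the trivial range, should be handled by observing $f(p)/q^{d-1}$ is then bounded and running the count over that quotient variable instead. Assembling all ranges and optimizing $z$ and $\theta_d$ against the target exponent $C$ completes the proof.
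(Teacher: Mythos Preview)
Your sieve skeleton matches the paper's Lemma~\ref{lem:lem1}: the small moduli are handled by Siegel--Walfisz to produce the main term $c'_f\pi(X)$, the middle range is dispatched trivially, and the whole problem is reduced to bounding
\[
	\mathcal{N}(X;A,B)=\#\{(n,a,b)\in\setN^3: n\sim X,\ a\sim A,\ b\sim B,\ \mu^2(a)=1,\ a^{d-1}b=f(n)\}
\]
with $A,B$ both of size $X^{1\pm\delta}$. So up to this point your outline is correct and essentially identical to the paper.

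The gap is in your treatment of the large range. You propose a ``Chebotarev-type or large-sieve argument exploiting the Galois group'', but this is precisely the direction that \emph{fails} for the open quartic cases with $\mathrm{Gal}(f)\in\{A_4,S_4\}$; indeed, the whole point of the paper is to produce a method that is insensitive to the Galois group. The paper's actual argument for $\mathcal{N}(X;A,B)$ is quite different from anything you suggest. One passes to the number field $K=\setQ(\theta)$ with $f(\theta)=0$ and shows (Lemma~\ref{lem:lem2}) that each counted triple yields a factorisation $m(n+\theta)=\alpha^{d-1}\beta$ in $\setZ[\theta]$ with $|\alpha^\sigma|\asymp A^{1/d}$ for every embedding $\sigma$. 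Writing $\alpha,\beta$ in the $\theta$-power basis and projecting coordinate-wise produces a system of $d-1$ bilinear equations in the coefficients $(x_0,\ldots,x_{d-1};y_0,\ldots,y_{d-1})$. One then runs the \emph{approximate determinant method}: the ratios $t_j=y_j/y_0$ are shown to lie within $O(1/X)$ of explicit rational functions of $s_i=x_i/x_0$, the $s$-range is cut into $O(M^{d-1})$ small boxes, and a Vandermonde-type determinant argument forces an extra bilinear auxiliary equation $C_{\hat B}(\mathbf{x};\mathbf{y})=0$ on each box (Lemma~\ref{lem:auxeq}). With $d$ equations in hand one applies Cramer's rule and a resultant argument (Lemma~\ref{lem:lem3}) to bound the number of $\mathbf{x}$ per box by $O(U_0^{d-1}X^\epsilon)$, and a lattice-point count over boxes finishes the estimate $\mathcal{N}(X;A,B)\ll X^{1-\epsilon}$.

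None of the tools you list (square sieve, character sums, Selberg sieve, Chebotarev) enter. Your remark that primality of $p$ might be inserted at the end via an upper-bound sieve is also off the mark: the paper simply drops primality when bounding $\mathcal{N}(X;A,B)$, since the required saving is a power of $X$ and the prime restriction plays no role in the hard range.
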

It should be noted that our methods are sufficiently robust to save an arbitrary power of $\log X$ in the error term which gives us an improvement over Helfgott's results. Indeed, the worst part of our error term comes from the Siegel-Walfisz Theorem and our methods can save a power of $X$ in the error term for the asymptotic formula for the quantity \eqref{eq:eqquaninbeg} when $k=d-1$. More precisely, we get the following theorem for a polynomial $f$ as
in Theorem \ref{thm:thm1}:
\begin{theorem}\label{thm:thm2}
Let
	\[
		N_f(X)=\#\{n\leq X: f(n)\text{ is } 
		(d-1)\text{-free}\}.
	\]
	Then, for some $\delta=\delta(d)$, we have
	\[
		N_f(X)=c_f X+O_{f}(X^{1-\delta}),
	\]
	as $x\rightarrow\infty$, where
	\[
		c_f=\prod_p\left(1-\frac{\rho(p^{d-1})}{p^{d-1}}\right),
	\]
	and
	\[
		\rho(d)=\#\{n\pmod d: d\divides f(n)\}.
	\]
\end{theorem}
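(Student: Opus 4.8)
\textbf{Proof proposal for Theorem \ref{thm:thm2}.}

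The plan is to count $n\leq X$ for which $f(n)$ is $(d-1)$-free by sieving out those $n$ for which some prime power $p^{d-1}$ divides $f(n)$. Write $P=X^{d/(d-1)}$ (up to a constant, an upper bound for $\abs{f(n)}$ when $n\leq X$), so that $f(n)$ being $(d-1)$-free is equivalent to $p^{d-1}\notdivides f(n)$ for every prime $p$ with $p^{d-1}\leq\abs{f(n)}$, i.e. $p\leq P^{1/(d-1)}=X^{d/(d-1)^2}$. By inclusion–exclusion it suffices to estimate, for each prime $p$, the number of $n\leq X$ with $p^{d-1}\divides f(n)$; this count is $\rho(p^{d-1})\lceil X/p^{d-1}\rceil$-type, giving $\frac{\rho(p^{d-1})}{p^{d-1}}X+O(\rho(p^{d-1}))$. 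The main term then assembles into $c_fX$, using that $\rho$ is multiplicative on prime powers (CRT) and that the Euler product converges because $\rho(p^{d-1})=O(1)$ for $p$ large (here irreducibility and the absence of a fixed $(d-1)$-th power prime divisor guarantee $\rho(p^{d-1})<p^{d-1}$ for all $p$, and in fact $\rho(p^{d-1})\ll_f 1$ away from the discriminant). So far this is the classical Ricci/Hooley skeleton.

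The difficulty, as always in this circle of problems, is the contribution of \emph{large} primes $p$, say $P^{1/(d-1)}<p\leq X^{d/(d-1)}$, where the trivial bound $X/p^{d-1}$ for the number of $n\leq X$ with $p^{d-1}\divides f(n)$ is too weak to sum. I would split this range into two. For the intermediate range $X^{\eta}<p\leq X^{d/(d-1)^2}$ (with $\eta$ a small positive constant to be chosen), I would use the fact that $p^{d-1}\divides f(n)$ forces $n$ to lie in one of $\rho(p^{d-1})$ residue classes mod $p^{d-1}$, but also exploit that for such large moduli each class contains at most $O(X/p^{d-1}+1)$ values and the number of relevant $(p,n)$ pairs can be bounded by counting solutions of $f(n)\equiv 0\pmod{p^{d-1}}$ via the geometry-of-numbers/lattice-point argument or via an estimate for the number of $n\leq X$ with $f(n)$ divisible by a large $(d-1)$-th power (here one uses $d-1\geq 2$ crucially). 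For the truly large primes $p>X^{d/(d-1)^2}$, the condition $p^{d-1}\divides f(n)$ with $\abs{f(n)}\ll X^d$ forces $f(n)=p^{d-1}m$ with $m\ll X^{d-d/(d-1)\cdot(d-1)}$... more usefully, it forces $f(n)$ to have a large $(d-1)$-free-violating factor, and one bounds the number of such $n$ by $O(X^{1-\delta})$ using that $f(n)$ cannot be highly divisible by a prime that is large relative to $X$ — this is exactly where $d\geq 3$ (so $d-1\geq 2$) makes the exponent work out, mirroring Erdős's argument but with power-saving obtained by optimizing $\eta$.

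Concretely, I expect the key technical lemma to be a power-saving bound of the shape
\[
	\#\set{n\leq X: \exists\, p>z \text{ with } p^{d-1}\divides f(n)}\ll_f \frac{X}{z^{d-2}}+z^{?},
\]
or an analogous bound counting $n\leq X$ with $f(n)$ divisible by the $(d-1)$st power of some integer in a dyadic range, proved by writing $f(n)=a b^{d-1}$ and counting lattice points on the resulting curves/estimating via the large sieve or via Chebyshev-type divisor bounds. Balancing the error $X/z^{d-2}$ from large primes against the error $\sum_{p\leq z}\rho(p^{d-1})\ll z$ from the inclusion–exclusion remainder yields $z=X^{1/(d-1)}$ and an overall error $O(X^{1-\delta})$ with $\delta=\delta(d)=1/(d-1)$ roughly (the precise $\delta$ depending on how sharp one makes the intermediate-range estimate). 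The main obstacle is making the large-prime bound genuinely power-saving rather than merely $o(X)$: this requires more than the squarefree-sieve input and is the place where the hypothesis $d\geq 3$ is indispensable, since for $d=2$ no such power saving (indeed no asymptotic at all of this strength) is known. I would therefore devote the bulk of the argument to establishing this lemma, after which Theorem \ref{thm:thm2} follows by the routine inclusion–exclusion and Euler-product bookkeeping sketched above. $\square$
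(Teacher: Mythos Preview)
Your outline correctly reproduces the standard reduction (inclusion--exclusion over $m^{d-1}\mid f(n)$, splitting at a threshold $z\asymp X^{1/(d-1)}$), which is exactly the content of Lemma~\ref{lem:lem1} in the paper. But the entire difficulty of Theorem~\ref{thm:thm2} lies in the range you wave past: moduli $m$ with $m>X^{1-\delta}$, i.e.\ the count
\[
\mathcal{N}(X;A,B)=\#\{(n,a,b):n\sim X,\ a\sim A,\ b\sim B,\ \mu^2(a)=1,\ a^{d-1}b=f(n)\}
\]
with $A,B\asymp X^{1+o(1)}$. Your ``key technical lemma'' is stated with a question-mark exponent and the suggested tools (large sieve, Chebyshev-type divisor bounds, ``mirroring Erd\H{o}s's argument'') do not yield a power saving here: Erd\H{o}s and Hooley obtain only $o(X)$ for this tail, and the power saving in Theorem~\ref{thm:thm2} is precisely what is new. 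Nothing in your sketch explains why the number of $n\le X$ with $f(n)$ divisible by some $a^{d-1}$ with $a\sim X$ should be $O(X^{1-\delta})$.

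The paper's route to this bound is substantially different from anything you indicate. One passes to the number field $K=\setQ(\theta)$ and factorises $m(n+\theta)=\alpha^{d-1}\beta$ with $N(\alpha)\asymp A$, $N(\beta)\asymp B$ (Lemma~\ref{lem:lem2}); writing $\alpha,\beta$ in coordinates gives $(d-1)$ bilinear relations $\pi_i(\alpha^{d-1}\beta)=O(1)$, and the \emph{approximate determinant method} (Lemma~\ref{lem:auxeq}) produces, on each small box in the $\mathbf{s}$-variables, an extra auxiliary bilinear form. The resulting $d\times d$ system is shown to have nonvanishing determinant (Lemma~\ref{lem:lem3}), which pins down $\mathbf{x}$ up to $O(U_0^{d-1}X^\epsilon)$ choices per box; summing over boxes via a lattice-point argument gives $\mathcal{N}(X;A,B)\ll X^{1-\epsilon}$. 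None of this machinery --- the number-field factorisation, the determinant method, the auxiliary-form construction, or the nonvanishing argument for $\det\mathcal{G}$ --- appears in your proposal, and without it the proof does not close.
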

The work of Browning \cite{Browning} is in parts a refinement of Heath-Brown \cite{RHBratptsonalgvar}. The key idea is to reduce the problem of Theorem \ref{thm:thm1} to a counting problem where one wants to find an upper bound for the number of points $(p,q,h)$ on the
algebraic variety defined by $f(p)=q^{d-1}h$, where $p$, $q$ and $h$ are restricted to certain sizes. Browning's argument then partially relies on work by Salberger \cite{Salberger} about the density of
integer points on affine surfaces.\bigskip\\
The proofs of Browning and Heath-Brown use the determinant method for which the interested reader may consult
Heath-Brown \cite{cime}. It should be noted that Heath-Brown has applied the approximate determinant method to problems involving power-free values of polynomials previously. In \cite{RHBpowerfree}, he considered irreducible polynomials of the shape $f(x)=x^d+c$. This problem gets then converted into the approximate Diophantine equation $a^kb=f(n)=n^d+O(1)$. And points $(n,a,b)$ therefore lie
close to the weighted projective curve $X_0^d=X_1X_2^k$. Thus, the particular shape of $f$ allows Heath-Brown to consider points close to a curve rather than points on a surface. And since the determinant method seems to be more efficient in counting points on varieties with lower dimension, this provides the key saving in his argument. Heath-Brown is able to handle Theorem \ref{thm:thm1} for $k=d-1$ and $d\geq 3$, provided $f$ has special shape.
\section{Preliminaries}
We will now start the proof of Theorem \ref{thm:thm1} and Theorem \ref{thm:thm2}. We will use the following terminology in this paper.
\begin{itemize}
	\item Pick $\theta\in\setC$ such that $f(\theta)=0$. Then
	$K=\setQ(\theta)$ is a number field and $\mathcal{O}_K$ 
	is the ring of integers of $K$.
	\item Let $\mathcal{B}=\{b_0,\ldots,b_{d-1}\}$ be an integral
	basis of $K$.
	\item Let $\mathcal{B}_\theta$ be the $\setQ$-basis 
	$\{1,\theta,\ldots,\theta^{d-1}\}$ of $K$.
	\item Let $\mathcal{O}_K^\times$ denote the units in $\mathcal{O}_K$.
	\item Let $\Delta_K^2$ be the discriminant of $K$ and let 
	let $\Delta^2(\theta)$ be the discriminant associated to 
	$\mathcal{B}_\theta$.
	\item Let $\langle\alpha_1,\ldots,\alpha_r\rangle$ denote the ideal of
	$\mathcal{O}_K$ generated by $\alpha_1,\ldots,\alpha_r\in\mathcal{O}_K$.
	\item If $\gamma\in K$ then we denote the conjugate of $\gamma$
	under an embedding $\sigma$ by $\gamma^\sigma$.
	\item We write $x\sim X$ to say that $X<x\leq 2X$ and we write
	$x\asymp X$ to say that there exist positive constants $A$, $B$,
	independent of $X$, such that $AX\leq |x|\leq BX$.
\end{itemize}

Our first task is to turn the problem into a problem where we count solutions of a Diophantine equation. More precisely, we shall prove the following lemma:
\begin{lemma}\label{lem:lem1}
Let $X$ be sufficiently large and let $\delta>0$ and $\eta>0$ be arbitrary. Then there exists values $A$ and $B$ with
\[
	X^{1-\delta}\ll A,B\ll X^{1+\delta},
\]
and $\delta'=\delta'(d)>0$ such that
\begin{equation}\label{eq:eqforn2xnx}
	N'_f(2X)-N'_f(X)=c'_f\pi(X)+O\left(\frac{X}{(\log X)^C}\right)
		+O(X^\eta \mathcal{N}(X;A,B)),
\end{equation}
and
\begin{equation}\label{eq:eqforn2xnx2}
	N_f(2X)-N_f(X)=c_f X+O(X^{1-\delta'})
		+O(X^\eta \mathcal{N}(X;A,B)),
\end{equation}
where
\[
	\mathcal{N}(X;A,B)=\#\{(n,a,b)\in\setN^3: n\sim X,
	 a\sim A, b\sim B, \mu^2(a)=1, a^{d-1}b=f(n) \}.
\]
\end{lemma}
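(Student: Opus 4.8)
The standard device is to split the $(d-1)$-free condition according to the size of the largest $(d-1)$-th power divisor of $f(n)$. Write $f(n) = a^{d-1} b$ where $a^{d-1}$ is the largest such divisor that is a $(d-1)$-th power with $a$ squarefree; then $f(n)$ is $(d-1)$-free precisely when $a = 1$. The plan is to estimate the count of $n \in (X,2X]$ for which $f(n)$ fails to be $(d-1)$-free, i.e. for which there exists a squarefree $a > 1$ with $a^{d-1} \mid f(n)$. By inclusion-exclusion on prime powers (or simply by bounding the number of $n$ with $p^{d-1}\mid f(n)$ for each prime $p$), the main term comes from small $a$ and is handled by counting $n \bmod a^{d-1}$: the density of such $n$ is $\rho'(a^{d-1})/\phi(a^{d-1})$ (resp. $\rho(a^{d-1})/a^{d-1}$), which after summing over all $a$ and using multiplicativity produces exactly the Euler product $c'_f$ (resp. $c_f$) together with $\pi(X)$ (resp. $X$), up to an error. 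To make the main term precise one uses the Siegel-Walfisz theorem in the prime case (for $n$ in a fixed residue class to prime modulus $a^{d-1}$ with $a$ up to a power of $\log X$), which is the source of the $O(X/(\log X)^C)$ term, and elementary counting of $n$ in residue classes in the non-prime case, giving $O(X^{1-\delta'})$.

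The error is controlled by a three-part range decomposition of the parameter $a$: small $a$ (up to some power of $\log X$ in the first case, or a small power of $X$ in the second) is absorbed into the main term via Siegel-Walfisz / trivial residue counting; intermediate $a$ (up to $X^{\kappa}$ for a suitable small $\kappa = \kappa(d)$) is handled trivially, since the number of $n \sim X$ with $p^{d-1}\mid f(n)$ for some $p$ in this range is $O(X/p^{d-1})$ summed over $p$, which converges because $d-1 \geq 2$; and large $a$ is where the genuinely hard counting problem sits. For large $a$ we have $a^{d-1} \leq |f(n)| \ll X^d$, so $a \ll X^{d/(d-1)}$, and writing $b = f(n)/a^{d-1}$ we get a triple $(n,a,b)$ on the surface $a^{d-1}b = f(n)$ with prescribed dyadic sizes. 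Partitioning the relevant range of $a$ (and the corresponding range of $b$) into $O(X^{\eta})$ dyadic boxes $a \sim A$, $b \sim B$ with $X^{1-\delta} \ll A, B \ll X^{1+\delta}$, each box contributes $O(\mathcal{N}(X;A,B))$, and summing over the $O(X^{\eta})$ boxes gives the final term $O(X^{\eta}\mathcal{N}(X;A,B))$ after replacing the sum by its maximum. One must check that the ``medium'' range threshold can indeed be taken as a power of $X$ so that the residual large-$a$ contribution is confined to $A, B \gg X^{1-\delta}$; this follows because $a \gg X^{\kappa}$ forces $b = f(n)/a^{d-1} \ll X^{d - \kappa(d-1)}$, and one chooses $\kappa$ close enough to $d/(d-1)$ so that both exponents lie in $[1-\delta, 1+\delta]$ — this is possible precisely because the ``diagonal'' $a \asymp X^{d/(d-1)}$, $b \asymp X^{d/(d-1)\cdot 0}$... more carefully, the balance point is $a^{d-1}\asymp X^d/b$ with the two sizes meeting near $A \asymp B \asymp X$, so a neighbourhood of width $X^{\delta}$ around this diagonal covers everything not already handled.

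The main obstacle in this lemma is the bookkeeping that guarantees the medium range can be pushed up to a fixed power of $X$ while keeping the main-term extraction valid: in the prime case we are constrained by Siegel-Walfisz to moduli $a^{d-1}$ of size at most a power of $\log X$, so the ``medium'' range $(\log X)^{A} < a \leq X^{\kappa}$ must be dispatched purely by the trivial bound $\sum_{p > (\log X)^{A}} X/p^{d-1} \ll X/(\log X)^{A(d-2)}$, which requires $d \geq 3$ (so $d - 2 \geq 1$) and forces us to take $A = A(C)$ large — this is exactly why the hypothesis $d \geq 3$ appears and why the error is $O(X/(\log X)^C)$ rather than a power saving. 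In the second (non-prime) case there is no such obstruction: elementary sieving lets us take the main-term range up to $X^{\kappa_0}$ for a small fixed $\kappa_0$, the medium range contributes $O(X^{1-\delta'})$ by the convergent sum $\sum_{p > X^{\kappa_0}} X/p^{d-1}$, and we are left with $A, B \gg X^{1-\delta'}$. Aside from this, the argument is routine: choose the dyadic decomposition, invoke the stated density computations, and collect the error terms.
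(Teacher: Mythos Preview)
Your overall architecture (Möbius inversion, three-range split for $a$, Siegel--Walfisz for the small range, trivial counting for the medium range, dyadic decomposition for the large range) matches the paper. But there is a genuine gap in how you obtain the \emph{lower} bound $B\gg X^{1-\delta}$ (equivalently the upper bound $A\ll X^{1+\delta}$).

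After the medium range is disposed of, the surviving $a$ satisfy $a>X^{1-\delta}$, which gives $A\gg X^{1-\delta}$ and hence $B\ll X^{1+(d-1)\delta}$. It does \emph{not} give $B\gg X^{1-\delta}$: since $a^{d-1}\le |f(n)|\ll X^d$, the parameter $a$ can range up to $X^{d/(d-1)}$, and correspondingly $b$ can be as small as $O(1)$. Your attempted justification --- pushing the medium-range cutoff $\kappa$ ``close enough to $d/(d-1)$'' --- cannot work: the trivial bound for the medium range is
\[
\sum_{m\le X^{\kappa}}\rho(m^{d-1})\Bigl(\tfrac{X}{m^{d-1}}+O(1)\Bigr)
\;\ll\; X\sum_{m}m^{\epsilon-(d-1)}\;+\;\sum_{m\le X^{\kappa}}m^{\epsilon},
\]
and the second sum is $\gg X^{\kappa}$, so taking $\kappa>1-\delta$ already destroys the main term. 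Your remark that ``a neighbourhood of width $X^{\delta}$ around the diagonal covers everything not already handled'' is simply false.

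The paper closes this gap by a separate, nontrivial argument: for each fixed $b\sim B$ with small $B$, write $n=n_i+tb$ for one of $\rho(b)\ll X^{\epsilon}$ residues and view $f(n_i+tb)=a^{d-1}b$ as a plane curve in $(t,a)$. Applying Heath-Brown's determinant-method bound (Theorem~15 of \cite{RHBratptsonalgvar}) to this absolutely irreducible curve gives
\[
\mathcal{N}(X;A,B)\ll X^{\delta_1}\,B\,(X/B+1)^{1/(d-1)},
\]
which is $\ll X^{1-\epsilon}$ whenever $B\ll X^{1-\delta}$. Only after this step are the remaining dyadic boxes confined to $X^{1-\delta}\ll A,B\ll X^{1+\delta}$. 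This input is essential and is not replaceable by the elementary considerations in your plan.
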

\begin{proof}
First, we observe that
\[
	\sum_{m^{d-1}\divides f(p)}\mu(m)
	=\begin{cases}
		1 &\text{if } f(p) \text{ is } (d-1) \text{-free},\\
		0 &\text{otherwise}.
	\end{cases}
\]
Thus, 
\begin{align}
	N'_f(2X)-N'_f(X) &=\sum_{p\sim X} \sum_{m^{d-1}\divides f(p)}\mu(m)
	\nonumber\\
	&=\sum_{m}\mu(m)N'_0(X;m)\label{eq:eqSw},
\end{align}
where 
\[
	N'_0(X;m)=\#\{X<p\leq 2X: f(p)\equiv 0\ \pmod{m^{d-1}}\}.
\]
Recall that
\[
	\rho(d)=\#\{n\pmod d: d\divides f(n)\}.
\]
Hence, $\rho$ is multiplicative and $\rho(p^{d-1})\ll_f 1$ and thus,
\[
	\rho'(m^{d-1})\leq \rho(m^{d-1})\ll_{\epsilon,f} m^\epsilon.
\]
The terms of the sum in \eqref{eq:eqSw} corresponding to small $m\leq (\log X)^{2C}$ will contribute the main term of the asymptotic formula. 
Define $\rho_m=\rho(m^{d-1})$ and let $a_1,\ldots,a_{\rho_m}$ be the solutions of $f(a_i)\equiv 0\ \pmod{m^{d-1}}$. Then, an application of the Siegel-Walfisz Theorem yields a constant $c>0$ such that
\begin{align*}
	N'_0(X;m) &= \sum_{i=1}^{\rho_m} 
			   \#\{X<p\leq 2X: p\equiv a_i\ \pmod{m^{d-1}}\}\\
			&= \sum_{\substack{i\leq\rho_m\\ (a_i,m)=1}}
			   \pi(X;m^{d-1},a_i) + O(\rho(m^{d-1}))\\
			&= \frac{\pi(X)\rho'(m^{d-1})}{\phi(m^{d-1})}+
			   O\left(\frac{\rho(m^{d-1}) X}{\exp(c\sqrt{\log X)}}\right).
\end{align*}
Because of the estimate $\phi(n)\gg_\epsilon n^{1-\epsilon}$, we can conclude that 
\begin{align*}
	\sum_{m\leq (\log X)^{2C}}\frac{\mu(m)\rho'(m^{d-1})}{\phi(m^{d-1})}
		&=\sum_{m=1}^\infty\frac{\mu(m)\rho'(m^{d-1})}{\phi(m^{d-1})}
		+O\left(\sum_{m>(\log X)^{2C}} m^{\epsilon-(d-1)}\right)\\
		&=c'_f+O((\log X)^{-C}).
\end{align*}
Furthermore,
\[
	\sum_{m\leq (\log X)^{2C}}\rho(m^{d-1})\ll
	\sum_{m\leq (\log X)^{2C}} m^\epsilon \ll (\log X)^{4C},
\]
and hence altogether:
\[
	\sum_{m\leq (\log X)^{2C}}\mu(m)N'_0(X;m)
	=c'_f\pi(X)+O\left(\frac{X}{(\log X)^C}\right).
\]
Next, let us consider the contribution to \eqref{eq:eqSw} of the $m$ in
the range $(\log X)^{2C}<m\leq X^{1-\delta}$. For these $m$, we shall
employ the trivial estimate
\begin{align*}
	N'_0(X;m) &\leq\#\{X<n\leq 2X: m^{d-1}\divides f(n)\}
			=\rho(m^{d-1})\left(\frac{X}{m^{d-1}}+O(1)\right)\\
			&\ll Xm^{\epsilon-d+1}+m^\epsilon.
\end{align*}
Thus,
\begin{align*}
	\sum_{(\log X)^{2C}<m\leq X^{1-\delta}}\mu(m)N'_0(X;m)
	&\ll X \sum_{m>(\log X)^{2C}} m^{1-d+\epsilon}+
	\sum_{m\leq X^{1-\delta}}m^\epsilon\\
	&=O\left(\frac{X}{(\log X)^C}\right).
\end{align*}
It remains to find an upper bound for the terms in \eqref{eq:eqSw}
corresponding to the large values of $m>X^{1-\delta}$. We get
\[
	\sum_{m> X^{1-\delta}}\mu(m)N'_0(X;m)
	\ll\#\{(n,a,b)\in\setN^3:
	n\sim X, a>X^{1-\delta}, \mu^2(a)=1, a^{d-1}b=f(n)\}.
\]
After a dyadic subdivision of the ranges for $a$ and $b$, we can deduce that for any $\delta>0$, there exist values $A\gg X^{1-\delta}$ and $B$ such that
the equation \eqref{eq:eqforn2xnx} holds.\smallskip\\ 
One can similarly establish \eqref{eq:eqforn2xnx2}. One starts with the expression
\[
	\sum_{m}\mu(m)\#\{X<n\leq 2X: m^{d-1}\divides f(n)\}.
\]
The terms corresponding to small $m\leq Y$, say contribute
$c_fX+O(X^\epsilon(XY^{2-d}+Y))$ and to optimize the error term, we pick $Y=X^{1/(d-1)}$. The terms with $m$ in the range $X^{1/(d-1)}<m\leq X^{1-\delta}$
contribute $O(X^{(1-\delta)(1+\epsilon)})$. Similarly to the above argument,
the terms with $m>X^{1-\delta}$ contribute the remaining error term
of \eqref{eq:eqforn2xnx2}.\smallskip\\
Our overall goal is therefore to show that $\mathcal{N}(X;A,B)\ll_\epsilon X^{1-\epsilon}$ for some $\epsilon>0$, which will then prove Theorem \ref{thm:thm1} and Theorem \ref{thm:thm2}. Next, we aim to further restrict the ranges of $a$ and $b$. Note that 
\[
	B\ll X^dA^{-(d-1)}\ll X^{1+(d-1)\delta},
\]
since $A\gg X^{1-\delta}$. For our next auxiliary bound, we shall consider the estimate
\begin{equation*}
	\mathcal{N}(X;A,B)\ll 
	\sum_{b\sim B}\#\{(n,a): n\sim X, a\sim A,\mu^2(a)=1, f(n)=a^{d-1}b\}.
\end{equation*}
In the following argument, we fix $b$ and let $n_1,\ldots,n_\nu$ be the solutions of $f(n)\equiv 0\ \pmod{b}$. In particular, $\nu=\rho(b)\ll X^\epsilon$. Thus, we get the estimate
\begin{equation}\label{eq:eqtrivial0}
	\mathcal{N}(X;A,B)\ll 
	\sum_{b\sim B}\sum_{i=1}^\nu\#\{(t,a): a\sim A, t\ll X/B+1,
	\mu^2(a)=1, f(n_i+tb)=a^{d-1}b\}.
\end{equation}
Thus, we shall now count solutions $(t,a)$ of the equation
\begin{equation}\label{eq:eqtrivial1}
	f(n_i+tb)=a^{d-1}b.
\end{equation}
The equation \eqref{eq:eqtrivial1} is of the form $p_1(t)=p_2(a)$, where $p_1$ is a polynomial of degree $d$ and $p_2$ is a polynomial of degree $d-1$. To get an upper bound on the number of pairs $(t,q)$ that satisfy \eqref{eq:eqtrivial1}, we shall employ Heath-Brown \cite[Theorem 15]{RHBratptsonalgvar}. It is easy to see that the polynomial \eqref{eq:eqtrivial1} is absolutely irreducible and with the same notation as in \cite{RHBratptsonalgvar}, we may apply the Theorem with $n=2$, $B_1\asymp X/B+1$ and $B_2\asymp A$, so that $T=\max\{B_1^d,B_2^{d-1}\}\geq B_2^{d-1}$ and hence the points $(t,a)$ satisfying \eqref{eq:eqtrivial1} lie on at most 
$k\ll X^\epsilon B_1^{\frac{1}{d-1}}$. auxiliary curves. Thus, by B\'{e}zout's Theorem and \eqref{eq:eqtrivial0}, we get the estimate
\[
	\mathcal{N}(X;A,B)\ll X^{\delta_1} B(X/B+1)^{\frac{1}{d-1}}.
\]
for any arbitrary $\delta_1>0$ and therefore, we get a negligible contribution if $B\ll X^{1-\delta}$. Hence, we can also assume that for any $\delta>0$, $B\gg X^{1-\delta}$. By using the relation $A^{d-1}B=X^d$, we can furthermore assume, that for any $\delta>0$, $A\ll X^{1+\delta}$. By redefining $\delta$, we conclude the proof of Lemma \ref{lem:lem1}.
\end{proof}
\section{Analysis in $\setQ(\theta)$}
In the previous section, we have shown that the proof of Theorems \ref{thm:thm1} and \ref{thm:thm2} can be concluded if we find a suitable upper bound for the number of solutions $(n,a,b)$ of the Diophantine equation $a^{d-1}b=f(n)$. The idea is now to analyze this equation in the field $K$.
This idea is basically from Heath-Brown \cite{RHBsqfreen21}, where he derives an asymptotic formula for the number of $n\leq X$ such that $f(n)=n^2+1$ is square-free. Heath-Brown considers the corresponding equation $e^2f=n^2+1$ in the Gaussian integers $\setZ[i]$. Using that $\setZ[i]$ has unique factorization, he deduces that there are $\alpha,\beta\in\setZ[i]$ such that $n+i=\alpha^2\beta$ with $N(\alpha)=e$ and $N(\beta)=f$. Taking the imaginary part of this equation then gives a bi-homogeneous equation $G(x_0,x_1;y_1,y_2)=1$ which Heath-Brown then applies the approximate determinant method to.\smallskip\\
In our more general setting, there are three issues to tackle. First, in our problem it is not necessarily true that $\mathcal{O}_K=\setZ[\theta]$. Secondly, $\mathcal{O}_K$ might not have unique factorization and thirdly, our method will produce an equation system of $d-1$ bi-homogeneous auxiliary equations. We shall prove the following lemma:
\begin{lemma} \label{lem:lem2}
	Let $(n,a,b)$ be a triple counted by $\mathcal{N}(X;A,B)$. 
	We write $a=a_1a_2$ such that $a_1\divides\Delta^2(\theta)$ and
	$(a_2,\Delta^2(\theta))=1$. Then there exist one of at most 
	$O_{K,\mathcal{B}}(1)$ triples $(m,m_1,m_2)\in\setZ^3$ and $\alpha,
	\beta\in\setZ[\theta]$ such that 
	\begin{equation}\label{eq:eqmptheta}
		m(n+\theta)=\alpha^{d-1}\beta,
	\end{equation}
	with $|N(\alpha)|=m_1 a_2$ and $|N(\beta)|=m_2b$. Furthermore, the order
	of magnitude of the conjugates of $\alpha$ is given by
	\[
		|\alpha^\sigma|\asymp|N(\alpha)|^{1/d}.
	\]
\end{lemma}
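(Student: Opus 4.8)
The plan is to start from the equation $a^{d-1}b = f(n)$ and lift it to the number field $K$ using the factorization $f(n) = \mathrm{lc}(f)\prod_\sigma(n - \theta^\sigma)$, where $\mathrm{lc}(f)$ denotes the leading coefficient. Working with the element $n+\theta$ (after adjusting for the sign convention on $\theta$), the first step is to pass from the equation in $\setZ$ to an equation between ideals of $\mathcal{O}_K$. Since $N(n+\theta)$ equals $\pm f(n)/\mathrm{lc}(f)$ up to a bounded factor, and since $a = a_1 a_2$ with $a_1 \mid \Delta^2(\theta)$ and $(a_2, \Delta^2(\theta)) = 1$, I would argue that the ideal $\langle n+\theta\rangle$ factors as $\mathfrak{a}^{d-1}\mathfrak{b}\,\mathfrak{c}$, where $\mathfrak{c}$ is supported only on primes dividing $\Delta^2(\theta)\cdot\mathrm{lc}(f)$ (hence of bounded norm), $N(\mathfrak{a}) \asymp a_2$, and $N(\mathfrak{b}) \asymp b$, with $\mathfrak{a}$ coprime to the conductor. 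The key local fact is that away from $\Delta^2(\theta)$, the ring $\setZ[\theta]$ is $\mathfrak{p}$-maximal, so the factorization of $\langle n+\theta\rangle$ into prime ideals mirrors the factorization of $f$ modulo the relevant prime powers; and since $f$ has no fixed $(d-1)$-th power prime divisor and $a_2^{d-1} \parallel$ the relevant part, the prime ideals dividing $\mathfrak{a}$ occur to exact multiplicity a multiple of $d-1$ in a way that lets us extract an honest $(d-1)$-th power of an ideal (absorbing bounded-norm corrections into $\mathfrak{c}$).

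The second step is to descend from ideals to elements. The ideals $\mathfrak{a}$ and $\mathfrak{b}$ need not be principal, but their classes lie in the finite class group $\mathrm{Cl}(K)$; choosing a fixed set of integral ideal representatives $\mathfrak{g}_1,\dots,\mathfrak{g}_h$ for the class group, there is an integer $m$ with $m \ll_K 1$ (a product of the norms of the chosen representatives, or more precisely the integer such that $\mathfrak{g}_i^{\text{something}}$ becomes principal) so that $m(n+\theta)$ is divisible, as an ideal, by a principal $(d-1)$-th power times a principal ideal. Concretely I would write $\mathfrak{a}\mathfrak{g}_i = \langle\alpha\rangle$ for suitable $i$ and suitable $\alpha \in \mathcal{O}_K$, similarly $\mathfrak{b}\mathfrak{g}_j\mathfrak{c}' = \langle\beta\rangle$, and then $m(n+\theta) = u\,\alpha^{d-1}\beta$ for a unit $u \in \mathcal{O}_K^\times$. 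Since there are only $h$ choices for each of $i,j$ and finitely many bounded correction ideals, the triple $(m,m_1,m_2)$ — where $m_1 = N(\mathfrak{g}_i)^{?}$ etc. records the discrepancy between $|N(\alpha)|$ and $a_2$ — ranges over a set of size $O_{K,\mathcal{B}}(1)$. To get $\alpha,\beta \in \setZ[\theta]$ rather than merely $\mathcal{O}_K$, I would multiply through by a further bounded factor dividing $\Delta^2(\theta)$ (recall $\Delta^2(\theta)\mathcal{O}_K \subseteq \setZ[\theta]$ holds after multiplication, since $\mathcal{O}_K \subseteq \frac{1}{\Delta^2(\theta)}\setZ[\theta]$... more carefully, $[\mathcal{O}_K:\setZ[\theta]]^2 \mid \Delta^2(\theta)/\Delta_K^2$ and $[\mathcal{O}_K:\setZ[\theta]]\mathcal{O}_K \subseteq \setZ[\theta]$), absorbing this into $m$; this changes $|N(\alpha)|, |N(\beta)|$ by bounded factors, which is exactly what the $m_1, m_2$ are there to accommodate.

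The final step is the size statement $|\alpha^\sigma| \asymp |N(\alpha)|^{1/d}$ for every embedding $\sigma$. This is where the unit $u$ must be disposed of: a priori $\alpha$ could be multiplied by a unit of large archimedean distortion, which would ruin the balance of the conjugates. The fix is to replace $\alpha$ by $u'\alpha$ for a suitable unit $u'$ (absorbing the inverse into $\beta$ via $u$), chosen so that the vector $(\log|(\alpha')^{\sigma_1}|,\dots)$ lies in a fixed bounded fundamental domain for the action of $\log|\mathcal{O}_K^\times|$ on the trace-zero hyperplane; by Dirichlet's unit theorem this domain is compact, so $|(\alpha')^\sigma|\asymp|N(\alpha')|^{1/d}$ uniformly. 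I expect \textbf{this last normalization to be the main obstacle} to write cleanly — not because it is deep, but because it requires care that the bounded fundamental-domain constant genuinely depends only on $K$ and $\mathcal{B}$ and not on $n$, and that shuffling the unit between $\alpha$ and $\beta$ interacts correctly with the $(d-1)$-th power structure (one wants $u = (u')^{-(d-1)}\cdot(\text{unit absorbed into }\beta)$, which is automatic since the units form a group). Once $\alpha$ is normalized this way, $|N(\alpha)|\asymp m_1 a_2$ gives the claimed asymptotic for each conjugate, completing the proof.
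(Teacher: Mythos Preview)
Your proposal is correct and follows essentially the same route as the paper: factorize the ideal $\langle n+\theta\rangle$ as $I_1^{d-1}I_2$ using that $\setZ[\theta]$ is maximal at primes not dividing $\Delta^2(\theta)$, descend from ideals to elements via the finiteness of the class group, multiply by a bounded integer (the index $[\mathcal{O}_K:\setZ[\theta]]$, or the paper's $r$) to land in $\setZ[\theta]$, and then normalize $\alpha$ by a unit so that its logarithmic embedding lies in a fixed fundamental domain for the Dirichlet lattice. The only cosmetic difference is that the paper, instead of fixing class-group representatives $\mathfrak{g}_i$, picks elements $\alpha_1\in I_1$, $\beta_1\in I_2$ of norm $\le c_K N(I_i)$ via a Minkowski-type bound and then observes that the auxiliary ideal $J_1^{d-1}J_2$ is principal with generator of bounded norm; the two devices are interchangeable and produce the same $O_{K,\mathcal{B}}(1)$ collection of triples $(m,m_1,m_2)$.
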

\begin{proof}
The first step in our proof of Lemma \ref{lem:lem2} is to factorize the ideal
$J=\langle n+\theta\rangle$. Let $q$ be a prime divisor of $a_2$ and let $P$ be a prime ideal above $q$ such that $P\divides J$. Observe that 
\begin{equation}\label{eq:eqindex}
	[\mathcal{O}_K:\setZ[\theta]+P]\cdot [\setZ[\theta]+P:P]
	=[\mathcal{O}_K:P]=N(P).
\end{equation}
Hence, $[\mathcal{O}_K:\setZ[\theta]+P]\divides N(P)$, and thus, $[\mathcal{O}_K:\setZ[\theta]+P]$ is a power of $q$.
However, we also have
\[
	\Delta_K^2\cdot [\mathcal{O}_K:\setZ[\theta]+P]\cdot
	 [\setZ[\theta]+P:\setZ[\theta]]
	=\Delta_K^2\cdot[\mathcal{O}_K:\setZ[\theta]]=\Delta^2(\theta).
\]
Hence, $[\mathcal{O}_K:\setZ[\theta]+P]\divides\Delta^2(\theta)$ and
$q\nmid\Delta^2(\theta)$. Thus, $[\mathcal{O}_K:\setZ[\theta]+P]=1$.
Hence, by \eqref{eq:eqindex}, $N(P)=[\setZ[\theta]+P:P]$. This together
with $P\divides\langle n+\theta\rangle$ and $P\divides \langle q\rangle$
gives that
a set of representatives for $(\setZ[\theta]+P)/P$ is given by $\{0,1,\ldots,q-1\}$, so that in fact $N(P)=q$. Now assume that there are two prime ideals $P_1, P_2$ above $q$, both dividing $J$.
We can factorize $f(x)$ into irreducible factors modulo $q$:
\[
	f(x)\equiv\left(\prod_i (x+n_i)\right)\prod_j f_j(x)\ \pmod{q},
\]
where $\prod_i (x+n_i)$ is the product of the linear factors of $f$. Recall that $q\nmid\Delta^2(\theta)$ and that $N(P_1)=N(P_2)=q$. Thus, Kummer's Theorem on factorizations of prime ideals tells us that without loss of generality, $P_i=\langle q,\theta+n_i\rangle$ for $i=1,2$. Since $P_1$ divides both $\langle \theta+n_1\rangle$ and $J=\langle\theta+n\rangle$, we must have that 
$n-n_1\in P_1$ and hence $q=N(P_1)\divides N(n-n_1)$. Therefore, $n\equiv n_1\ \pmod{q}$ and similarly $n\equiv n_2\ \pmod{q}$. Thus, $n_1\equiv n_2\ \pmod{q}$ and therefore, $P_1=P_2$. Thus, the factor $P$ of $J$ occurs with multiplicity $d-1$. Hence, we get the ideal factorization
$J=I_1^{d-1}I_2$, with $N(I_1)=a_2$ and $N(I_2)=a_1^{d-1}b$.\bigskip\\
Let $\alpha_1\in I_1\setminus\{0\}$ be such that 
$|N(\alpha_1)|\leq c_K N(I_1)$
and let $\beta_1\in I_2\setminus\{0\}$ be such that $|N(\beta_1)|\leq c_K N(I_2)$. In particular, there exist non-zero ideals $J_1$ and $J_2$ of
$\mathcal{O}_K$ such that $\langle\alpha_1\rangle =I_1J_1$ and  $\langle\beta_1\rangle =I_2J_2$, with $N(J_1)\leq c_K$ and $N(J_2)\leq c_K$.
Recall that $\langle n+\theta\rangle=I_1^{d-1}I_2$ and hence 
$\langle n+\theta\rangle J_1^{d-1}J_2=\langle\alpha_1^{d-1}\beta_1\rangle$.
Thus,
The ideal $J_1^{d-1}J_2$ must be principal. We may therefore assume that
$J_1^{d-1}J_2=\langle\mu\rangle$ for some $\mu\in\mathcal{O}_K$ with
$|N(\mu)|\leq C_K^d$. Let $[\rho]_\sim$ denote the equivalence class of the equivalence relation defined on $\mathcal{O}_K$ by 
\[
	\rho_1\sim\rho_2\Leftrightarrow\rho_1,\rho_2\text{ are associates}.
\]
We then define $\lambda_K=\prod\rho$,
where the product is over the equivalence classes $[\rho]_\sim$ with $N(\rho)\leq C_K^d$. Thus, $\lambda_K$ is well defined up to a multiple of a unit and it depends only on $K$. Furthermore, $N(\mu)\divides N(\lambda_K)$. Thus, there are only $O_K(1)$ choices for $N(\mu)$, $N(J_1)$ and $N(J_2)$.
There are also only $O_K(1)$ choices for $a_1$.
We can conclude from the above that
\begin{equation}\label{eq:eqntheta}
	(n+\theta)\mu=\alpha_1^{d-1}\beta_1\epsilon_1,
\end{equation}
where $\epsilon_1\in\mathcal{O}_K^\times$. We shall now employ the following sub-lemma:
\begin{lemma}\label{lem:lemunit} Let $\gamma\in K\setminus\{0\}$. Then, there exists an $\epsilon\in\mathcal{O}_K^\times$ such that 
$|(\epsilon\gamma)^\sigma|\asymp_K |N(\gamma)|^{1/d}$.
\end{lemma}
\begin{proof}
Let $\underline v$ be the vector $(1,1,\ldots,1)\in\setZ^{r+s}$.
We write the embeddings of $K$ as $\sigma_1,\ldots,\sigma_r,
\sigma_{r+1},\ldots,\sigma_{r+2s}$ so that 
$\sigma_1,\ldots,\sigma_r$ are the real embeddings and $\sigma_{r+1},
\ldots,\sigma_{r+s}$ are each one of the complex conjugate pairs of embeddings. In particular, $r+2s=[K:\setQ]=d$.
We now define the maps $\theta$ and $\phi$ by
\begin{align*}
	\theta:K^\times &\rightarrow\setR^{r+s},\\
	\alpha &\mapsto(\log\vert\alpha^{\sigma_i}\vert)_{i=1,\ldots,r+s},
\end{align*}
and
\begin{align*}
	\phi:\setR^{r+s} &\rightarrow\setR^{r+s},\\
	(x_1,\ldots,x_{r+s})&\mapsto s\cdot\underline{v},
\end{align*}
where $s=d^{-1}(\sum_{k=1}^r x_k+2\sum_{k=r+1}^{r+s} x_k)$.
Note that $\theta$ is a homomorphism and that $\phi$ is a $\setR$-linear
idempotent map. Furthermore, 
$({d^{-1}\log\vert N(\gamma)\vert})\cdot\underline v=\phi(\theta(\gamma))$.
Thus, it suffices to show that there exists a unit $\epsilon$ such that
\begin{equation}\label{eq:equnitineq}
	\phi(\theta(\gamma))-\theta(\epsilon\gamma)\ll_K 1.
\end{equation}
We observe that by Dirichlet's Unit Theorem, $\theta$ maps the units onto
a lattice $\Lambda$ of dimension $r+s-1$. Furthermore,
$\phi(\theta(\gamma))-\theta(\gamma)\in\text{Ker}\ \phi$ and
for any unit $\epsilon$, $\theta(\epsilon)\in\text{Ker}\ \phi$. 
Thus, $\text{Ker}\ \phi$ is a $r+s-1$-dimensional vector space containing
the lattice $\Lambda$. Thus, by considering $\phi(\theta(\gamma))-\theta(\gamma)$ modulo the fundamental domain
of $\Lambda$, we can indeed pick $\epsilon$ such that \eqref{eq:equnitineq}
holds and the implied constant in \eqref{eq:equnitineq} depends only on the
size of the fundamental domain of $\Lambda$ which is determined by $K$.
This proves Lemma \ref{lem:lemunit}.
\end{proof}
Thus, by multiplying $\epsilon_1$ and $\alpha_1$ in \eqref{eq:eqntheta} with suitable units, if necessary, we may assume without loss of generality that 
$|\alpha_1^\sigma|\asymp|N(\alpha_1)|^{1/d}$ for all $\sigma$.
Recall that $\mathcal{B}=\{b_0,\ldots,b_{d-1}\}$ is an integral basis for $K$ and that $\{1,\theta,\ldots,\theta^{d-1}\}$ is a $\setQ$-basis
for $K$. Thus, there exist $r_{ij}\in\setQ$ such that
$b_i=\sum_j r_{ij}\theta^j$. Let $r$ be the least common multiple
of the denominators of the $r_{ij}$. Then, $r$ is an integer determined by $K$ and $\mathcal{B}$ such that $rb_i\in\setZ[\theta]$ for all $i$.
Multiplying \eqref{eq:eqntheta} by $r^d N(\mu)\mu^{-1}\in\mathcal{O}_K$, 
we obtain
\[
	(n+\theta)N(\mu)r^d=(r\alpha_1)^{d-1}(r\beta_1\epsilon_1 N(\mu)\mu^{-1}).
\]
We shall put $\alpha=r\alpha_1$ and 
$\beta=r\beta_1\epsilon_1 N(\mu)\mu^{-1}$. Note that
$\alpha_1$ and $\beta_1\epsilon_1 N(\mu)\mu^{-1}$ are in $\mathcal{O}_K$ and
thus, $\alpha,\beta\in\setZ[\theta]$. Also, 
$|\alpha^\sigma|\asymp |N(\alpha)|^{1/d}$.
Furthermore, $N(\alpha)=r^dN(J_1)a_2$ and 
$N(\beta)=r^dN(J_2)N(\mu)^{d-1}a_1^{d-1}b$.
Thus, if we set $m=N(\mu)r^d$, $m_1=r^dN(J_1)$ and
$m_2=r^dN(J_2)N(\mu)^{d-1}a_1^{d-1}$,
then $m, m_1$ and $m_2$ are determined by up to $O_{K,\mathcal{B}}(1)$ choices and the proof of Lemma \ref{lem:lem2} follows.
\end{proof}
\section{The Approximate Determinant Method}
Recall that
$\mathcal{B_\theta}=\{1,\theta,\ldots,\theta^{d-1}\}$ is 
the $\theta$-power basis of $K$ over $\setQ$.
We shall need the following lemma:
\begin{lemma} \label{lem:lemproj}
	There exist constants $c_0,\ldots,c_{d-1}\in K$ only
	depending on $K$ with the following property:\\
	If $\gamma=\sum_{i=0}^{d-1}r_i\theta^i$ with $r_i\in\setQ$ is
	an arbitrary element of $K$, then
	$r_i=\Tr(c_i\gamma)$.
\end{lemma}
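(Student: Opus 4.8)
The plan is to obtain $c_0,\dots,c_{d-1}$ as the dual basis of $\mathcal{B}_\theta=\{1,\theta,\dots,\theta^{d-1}\}$ with respect to the trace pairing on $K$. Recall that $\langle x,y\rangle=\Tr(xy)$ is a symmetric $\setQ$-bilinear form on $K$, regarded as a $d$-dimensional $\setQ$-vector space. Its Gram matrix in the basis $\mathcal{B}_\theta$ is $M=\bigl(\Tr(\theta^{i+j})\bigr)_{0\le i,j\le d-1}$, and $\det M=\Delta^2(\theta)\neq 0$, since $f$ is irreducible over $\setQ$ and hence separable. Consequently the trace form is non-degenerate.

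Therefore the map sending $c\in K$ to the functional $\gamma\mapsto\Tr(c\gamma)$ is a $\setQ$-linear isomorphism of $K$ onto its dual space $\mathrm{Hom}_\setQ(K,\setQ)$. For each index $i$, the $i$-th coordinate functional $\gamma=\sum_{j}r_j\theta^j\mapsto r_i$ lies in $\mathrm{Hom}_\setQ(K,\setQ)$, so there is a unique $c_i\in K$ with
\[
	\Tr(c_i\theta^j)=\delta_{ij}\qquad(0\le j\le d-1),
\]
which is precisely the assertion that $r_i=\Tr(c_i\gamma)$ for every $\gamma\in K$. Explicitly, $c_i=\sum_{j}(M^{-1})_{ij}\,\theta^j$, so the $c_i$ lie in $K$ and depend only on $K$ together with the fixed choice of $\theta$, as required.

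Since the argument is just linear algebra over $\setQ$, there is no genuine obstacle; the single fact that must be invoked is the non-vanishing of $\det M=\Delta^2(\theta)$, i.e.\ the separability of $f$, which guarantees that the trace form is non-degenerate and hence that the dual basis exists.
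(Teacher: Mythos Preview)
Your proof is correct and is essentially the same as the paper's: both invert the Gram matrix $M=\bigl(\Tr(\theta^{i+j})\bigr)$ of the trace pairing, using $\det M=\Delta^2(\theta)\neq 0$, to obtain $c_i=\sum_j (M^{-1})_{ij}\theta^j$. The only difference is that you phrase it in the language of dual bases, whereas the paper writes the matrix identity $\mathbf{r}=\Tr(\mathcal{C}^{-1}\mathbf{b}_\theta\gamma)$ directly.
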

\begin{proof}
We shall write $\mathbf{v}$ for a vector $(v_i)_{i=0,\ldots,d-1}$, and we also define the trace of a vector to be the coordinate-wise application of the trace, i.e. $Tr(\mathbf{v})=(Tr(v_i))_i$.
Furthermore, let $\mathcal{C}$ be the matrix $(Tr(\theta^{i+j}))_{j,i=0,\ldots,d-1}$. Then $\mathcal{C}\in\mathbb{M}_d(\setZ)$.
Observe that $\mathcal{C}$ has determinant $\det(\mathcal{C})=\Delta^2(\theta)\neq 0$
and hence is invertible with $\mathcal{C}^{-1}\in\mathbb{M}_d(\setQ)$.
Let $\mathbf{b}_\theta=(1,\theta,\ldots,\theta^{d-1})$. It is clear that $\Tr(\gamma\mathbf{b}_\theta)=\mathcal{C}\mathbf{r}$, and thus
\[
	\mathbf{r}=
	\mathcal{C}^{-1}\Tr(\gamma\mathbf{b}_\theta)
	=\Tr(\mathcal{C}^{-1}\mathbf{b}_\theta\gamma).
\]
Hence, if we define $\mathbf{c}=\mathcal{C}^{-1}\mathbf{b}_\theta$, then the claim of the lemma follows.
\end{proof}
We define the map
\[
	\pi_j(\gamma)=\Tr(c_j\gamma),
\]
which thus is the projection of $\gamma\in K$ to its $j$-th coordinate with respect to the basis $\mathcal{B}_\theta$.
Now let us go back to the equation \eqref{eq:eqmptheta}. For the remainder of this work, we shall write
\[
	\alpha=\sum_{i=0}^{d-1}x_i\theta^i,\qquad 
	\beta= \sum_{i=0}^{d-1}y_i\theta^i,
\]
with $x_i,y_i\in\setZ$. By Lemma \ref{lem:lemproj}, we have that
$y_i=Tr(c_i\beta)=\sum_\sigma c_i^\sigma\beta^\sigma$. Now we use the equation 
\eqref{eq:eqmptheta} to deduce that
\[
	y_i=\sum_\sigma\frac{m p c_i^\sigma}{\alpha^{(d-1)\sigma}}
	+O_K\left(\max_\sigma|\alpha^{-(d-1)\sigma}|\right).
\]
By Lemma \ref{lem:lem2}, $|\alpha^\sigma|=|N(\alpha)|^{1/d}\asymp A^{1/d}$
for all $\sigma$ so that
\[
	y_i=y_{i,0}+O(A^{-(d-1)/d}),
\]
say. To simplify our notation, we shall now assume that $|x_i|\leq |x_0|$  and $|y_i|\leq |y_0|$ for all $i$. This assumption will in fact be without loss of generality and $x_0$ and $y_0$ could be replaced with any largest
$x_k$ and $y_l$, say. The fact that $N(\alpha)\asymp A$
and $N(\beta)\asymp B$ implies 
\begin{align*}
	&|x_i|\ll A^{1/d}, |y_i|\ll B^{1/d}\text{ for all } i,\\
	 \text{and}\quad &|x_0|\gg A^{1/d}, |y_0|\gg B^{1/d}.
\end{align*}
Hence, we also have $|y_{0,0}|\gg B^{1/d}$. Thus, if we set $s_i=x_i/x_0$
and $t_i=y_i/y_0$ for $i=0,\ldots,d-1$ then
\[
	t_j=\frac{y_{j,0}}{y_{0,0}}+O\left(\frac{1}{X}\right)
\]
for $j=1,\ldots,d-1$.
Note that ${y_{j,0}}/{y_{0,0}}$ is a rational function of degree $(d-1)^2$ in the variables $s_1,\ldots,s_{d-1}$ with coefficients in $\setC$. Thus, 
we set
\begin{equation}\label{eq:eqtj}
	t_j=F_j(s_1,\ldots,s_{d-1})+O\left(\frac{1}{X}\right),
\end{equation}
where $F_j={y_{j,0}}/{y_{0,0}}$. Note that $|y_{0,0}|\gg B^{1/d}$ and hence,
the denominator of $F_j$ is non-zero provided $X$ is large enough.
We now want to find an upper bound on the
number of points $(x_0,\ldots,x_{d-1})\in\setZ^d$. 
We continue by splitting the possible range of the $(d-1)$-tuple
$(s_1,\ldots,s_{d-1})$ into $O(M^{d-1})$ boxes of the shape
\begin{equation}\label{eq:eqbox}
	\hat B=\{(s_1,\ldots,s_{d-1})\in\setQ^{d-1}: 
	s_i\in (s_{i,0},s_{i,0}+O(M^{-1})),\ i=1,\ldots,d-1\}.
\end{equation}
Our goal is to find an upper bound on the number of points $(s_1,\ldots,s_{d-1})\in\setQ^{d-1}$ inside one such box.
We impose the condition $M^{d-1}\ll \min(A,X)$ on $M$.
Thus, we may now fix all $s_{i,0}\ll 1$ and one such box $\hat B$. Hence,
we may assume that 
\begin{equation}\label{eq:eqsifinal}
	s_i=s_{i,0}+\alpha_i,\qquad \alpha_i\ll M^{-1},
\end{equation}
for $i=1,\ldots,d-1$. Next,
consider equation \eqref{eq:eqtj}. Since $M\ll X^\frac{1}{d-1}$, and since $F_j$ has
no zeros in the denominator, we can assume that $F_j$ has partial derivatives
of all orders, and thus, we may apply Taylor's Theorem to deduce that
\begin{equation}\label{eq:eqtjfinal}
	t_j=P_j(\alpha_1,\ldots,\alpha_{d-1})+\beta_j,\qquad
	\beta_j\ll X^{-1},
\end{equation}
for $j=1,\ldots,d-1$.
Here, $P_j$ is a polynomial in $d-1$ variables of sufficiently large degree with coefficients of size $O(1)$. This is because the coefficients only depend on
$s_{1,0},\ldots,s_{d-1,0}$, and $s_{i,0}\ll 1$ for all $i$, and because the denominator of $F_j$ is $\gg 1$.\bigskip\\
We are now ready to apply the approximate determinant method.
The idea is to consider the monomials $s_it_j$, for $(i,j=0,\ldots,d-1)$,
where we recall that $s_0=t_0=1$ by definition.
We write these monomials as $m_r(\mathbf{s},\mathbf{t})$, 
where $r\leq R$, with $R=d^2$. Assume that the solutions of \eqref{eq:eqmptheta} with $\mathbf{s}\in \hat B$ are
$(\mathbf{s}^{(1)},\mathbf{t}^{(1)}),\ldots,
(\mathbf{s}^{(J)},\mathbf{t}^{(J)})$.
Then we define the $J\times R$ matrix 
$\mathcal{M}$ with $(j,r)$-th entry being 
$m_r(\mathbf{s}^{(j)},\mathbf{t}^{(j)})$. Our aim is to show that $\mathcal{M}$ has rank strictly less than $R$, provided we chose $M$ appropriately. This will then enable us to show that there is a non-zero vector $\mathbf{v}$ such that $\mathcal{M}\mathbf{v}=0$. Thus, if we define the the polynomial
$C_{\hat B}(\mathbf{s},\mathbf{t})=\sum_{r=1}^R v_r 
m_r(\mathbf{s},\mathbf{t})$, then
$C_{\hat B}(\mathbf{s}^{(j)},\mathbf{t}^{(j)})=0$ for all our solutions
$(\mathbf{s}^{(j)},\mathbf{t}^{(j)})$ with $\mathbf{s}^{(j)}\in\hat B$. 
Observe that $\mathcal{M}$ is a matrix with rational entries
and the vector $\mathbf{v}$
can be constructed from subdeterminants of $\mathcal{M}$. Thus,
$\mathbf{v}\in\setQ^R$ and by clearing the denominators of
the coefficients of $C_{\hat B}$, we may assume that $C_{\hat B}$ has integer coefficients
of size bounded by a power of $X$. Thus, we shall assume that $\Vert C_{\hat B}\Vert\ll X^\kappa$, say.\smallskip\\
We now proceed to show that $\mathcal{M}$ has rank strictly less
than $R$. Without loss of generality $J\geq R$, since otherwise this is trivial. Thus, it suffices to show that every $R\times R$ subdeterminant of 
$\mathcal{M}$ vanishes. Without loss of generality, let us consider the subdeterminant $\Delta$ of $\mathcal{M}$ coming from the first 
$R$ rows and columns. Note that $j$-th row of $\mathcal{M}$ has entries with common denominator of size $A^{1/d}B^{1/d}$. Hence, if we can show that $\Delta\ll (AB)^{-\frac{R}{d}}$, then $\Delta=0$.  Substituting \eqref{eq:eqsifinal}
and \eqref{eq:eqtjfinal} into our matrix, we obtain a generalized
$R\times R$ Vandermonde determinant in $\alpha_i$ and
$\beta_j$ with entries of the shape
\[
	(s_{i,0}+\alpha_i)^{\epsilon_1}
	(P_j(\alpha_1,\ldots,\alpha_{d-1})+\beta_j)^{\epsilon_2},
\]
with $i,j\in\{1,\ldots,d-1\}$ and $\epsilon_1,\epsilon_2\in\{0,1\}$.
Note that we have
\[
	\alpha_i\ll T_1^{-1},\qquad\beta_j\ll T_2^{-1},
\]
where $T_1\asymp M$ and $T_2\asymp X$.
We proceed to find an upper bound for $\Delta$. We order the monomials
$T_1^{-a}T_2^{-b}$ in decreasing size, $1=M_0\geq M_1\geq\ldots$, say. By  Lemma 3 of Heath-Brown \cite{RHBdiffthreekpow},
\[
	\Delta\ll_d\prod_{r=1}^R M_r.
\]
We have that $T_1\ll T_2^{1/(d-1)}$ and $d\geq 3$. We first consider the 
case $d\geq 4$ in which the largest monomial sizes are 
$1\geq T_1^{-1}\geq T_1^{-2}\geq T_1^{-3}$.
The number of monomials of degree $D$ in $N$ variables is
\[
	n(D,N)=\binom{N+D-1}{D}.
\]
The number of monomials $M_h$ with size $T_1^{-m}$ is
$n(m,d-1)$. Observe that
\[
	\mathcal{S}:=\sum_{m=0}^2 n(m,d-1)=\frac{1}{2}d(d+1)<d^2=R,
\]
and
\[
	\sum_{m=0}^3 n(m,d-1)=\frac{1}{6}d(d+1)(d+2)\geq d^2=R
\]
for all $d\geq 3$. Thus, the factors in $\prod_r M_r$ are all of size
$T_1^{-m}$, with $m\in\{0,1,2,3\}$, and we obtain the bound
$\Delta\ll T_1^{-\xi}\ll M^{-\xi}$, where
\[
	\xi=\sum_{m=0}^2 m n(m,d-1) + 3\cdot (R-\mathcal{S})
	=\frac{1}{2}(d-1)(5d+2).
\]
Thus, if $(AB)^dM^{-\xi}\ll 1$ 
with a suitably small implied constant, then
$\Delta=0$.\smallskip\\
We apply a similar argument in the case $d=3$. We will pick $M$ such that $X^{1/3}\leq M\leq X^{1/2}$. As above, we consider an $R\times R$
matrix with $R=9$. The largest 9 monomials are
$1,\alpha_1,\alpha_2,\alpha_1^2,\alpha_1\alpha_2,\alpha_2^2,\beta_1,\beta_2,
\alpha_1^3$. Thus, $\Delta\ll M^{-11}X^{-2}$. Therefore, 
$M^{11}\gg (AB)^3X^{-2}$ implies $\Delta=0$. And indeed, 
$(AB)^{3/11}X^{-2/11}\gg X^{1/3}$, since $A,B\gg X^{1-\delta}$, 
if $\delta$ is small enough. Thus, we have proved the following lemma:
\begin{lemma}\label{lem:auxeq}
	Let $\epsilon>0$. Assume that $M$ is an integer satisfying 
	\[
		(AB)^{\frac{2d}{(5d+2)(d-1)}}\ll M\ll \min(X^{1/{(d-1)}},
			A^{1/{(d-1)}}).
	\]
	if $d\geq 4$ and
	\[
		(AB)^{3/11}X^{-2/11}\ll M\ll \min(X^{1/2},A^{1/2})
	\]
	if $d=3$.
	Then for any box $\hat B$ of shape \eqref{eq:eqbox}, there exists
	a non-zero bilinear integer form 
	$C_{\hat B}(\mathbf{x};\mathbf{y})$,
	with coefficients of size $\Vert C_{\hat B}\Vert\ll X^\kappa$ such that
	$C_{\hat B}(\mathbf{x},\mathbf{y})=0$ for all solutions of 
	\eqref{eq:eqmptheta} with $\mathbf{s}\in\hat B$.
\end{lemma}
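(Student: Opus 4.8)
The plan is to carry out the approximate determinant method set up in the discussion preceding the lemma. First I would make the matrix $\mathcal{M}$ explicit: having fixed a box $\hat B$ of the shape \eqref{eq:eqbox}, let $(\mathbf{s}^{(1)},\mathbf{t}^{(1)}),\dots,(\mathbf{s}^{(J)},\mathbf{t}^{(J)})$ be the solutions of \eqref{eq:eqmptheta} with $\mathbf{s}^{(j)}\in\hat B$, order the $R=d^2$ monomials $m_r(\mathbf{s},\mathbf{t})=s_it_j$ ($0\le i,j\le d-1$, $s_0=t_0=1$), and let $\mathcal{M}$ be the $J\times R$ matrix with $(j,r)$-entry $m_r(\mathbf{s}^{(j)},\mathbf{t}^{(j)})$. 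The logical skeleton is: if $\rank\mathcal{M}<R$, then $\ker\mathcal{M}$ contains a non-zero $\mathbf{v}\in\setQ^R$, which may be built by Cramer's rule from a maximal non-singular square submatrix; the form $C_{\hat B}(\mathbf{s},\mathbf{t})=\sum_r v_r m_r(\mathbf{s},\mathbf{t})$ then vanishes at every such solution, and multiplying through by $x_0y_0$ turns it into the bilinear form $C_{\hat B}(\mathbf{x},\mathbf{y})=\sum_r v_r x_iy_j$. Since the entries of $\mathcal{M}$ are rationals $\ll1$ with denominator $x_0y_0\asymp(AB)^{1/d}$, which is a fixed power of $X$, one may take $\mathbf{v}$ integral of size $\ll X^\kappa$, so $C_{\hat B}(\mathbf{x},\mathbf{y})$ has the required integer coefficients; the case $J<R$ is trivial.

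The crux is to show $\rank\mathcal{M}<R$, i.e.\ that every $R\times R$ subdeterminant vanishes; by symmetry it suffices to treat the one, $\Delta$ say, from the first $R$ rows and columns. Since the $j$-th row of $\mathcal{M}$ has entries with common denominator $x_0^{(j)}y_0^{(j)}\asymp(AB)^{1/d}$, the rational $\Delta$ has denominator of size at most $(AB)^{R/d}$, so it is enough to prove $\Delta\ll(AB)^{-R/d}$ with a small enough implied constant. I would insert the local parametrizations $s_i=s_{i,0}+\alpha_i$ with $\alpha_i\ll M^{-1}$ from \eqref{eq:eqsifinal} and $t_j=P_j(\alpha_1,\dots,\alpha_{d-1})+\beta_j$ with $\beta_j\ll X^{-1}$ from \eqref{eq:eqtjfinal}, turning $\Delta$ into a generalized Vandermonde determinant in the small quantities $\alpha_i$ (size $\ll T_1^{-1}$, $T_1\asymp M$) and $\beta_j$ (size $\ll T_2^{-1}$, $T_2\asymp X$), with polynomial entries of bounded degree and $O(1)$ coefficients. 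Lemma~3 of Heath-Brown \cite{RHBdiffthreekpow} then gives $\Delta\ll_d\prod_{r=1}^R M_r$, where $M_0\ge M_1\ge\cdots$ is the list of monomials $T_1^{-a}T_2^{-b}$ in decreasing order of size.

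It remains to count monomials and optimize. As $T_1\ll T_2^{1/(d-1)}$, the dominant scales are powers of $T_1\asymp M$; writing $n(m,d-1)=\binom{d-2+m}{m}$ for the number of monomials of $\alpha$-degree $m$, the identities $\sum_{m=0}^{2}n(m,d-1)=\frac{1}{2}d(d+1)<d^2$ and $\sum_{m=0}^{3}n(m,d-1)=\frac{1}{6}d(d+1)(d+2)\ge d^2$ (both for $d\ge3$) show that for $d\ge4$ the $R$ largest monomials all have $\alpha$-degree at most $3$, so $\Delta\ll M^{-\xi}$ with $\xi=\sum_{m=0}^{2}m\,n(m,d-1)+3(R-\mathcal{S})=\frac{1}{2}(d-1)(5d+2)$ and $\mathcal{S}=\frac{1}{2}d(d+1)$. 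Hence $(AB)^dM^{-\xi}\ll1$, i.e.\ $M\gg(AB)^{d/\xi}=(AB)^{2d/((5d+2)(d-1))}$, forces $\Delta=0$. For $d=3$ one lists the nine largest monomials explicitly, $1,\alpha_1,\alpha_2,\alpha_1^2,\alpha_1\alpha_2,\alpha_2^2,\beta_1,\beta_2,\alpha_1^3$, obtaining $\Delta\ll M^{-11}X^{-2}$, so $M^{11}\gg(AB)^3X^{-2}$, i.e.\ $M\gg(AB)^{3/11}X^{-2/11}$, suffices; since $A,B\gg X^{1-\delta}$ this threshold is $\gg X^{1/3}$ once $\delta$ is small.

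The main obstacle is the combinatorial optimization in this last step: one must verify that the chosen $R=d^2$ monomials $s_it_j$ really do force the subdeterminant to be that small — this is precisely where the stated lower bound on $M$ comes from — and that the interval for $M$ so obtained is non-empty and consistent with the upper constraint $M\ll\min(X^{1/(d-1)},A^{1/(d-1)})$, which is needed both for the boxes \eqref{eq:eqbox} to genuinely subdivide the parameter range (the bound $M^{d-1}\ll\min(A,X)$) and to license the Taylor expansion giving \eqref{eq:eqtjfinal}. Checking $(AB)^{2d/((5d+2)(d-1))}\ll\min(X^{1/(d-1)},A^{1/(d-1)})$ and its $d=3$ analogue reduces to elementary estimates from $X^{1-\delta}\ll A,B\ll X^{1+\delta}$ (with $A^{d-1}B=X^d$) and $\delta=\delta(d)$ small; everything else is the bookkeeping already shown above.
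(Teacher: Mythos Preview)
Your proposal is correct and follows essentially the same approach as the paper: the same $R=d^2$ monomials $s_it_j$, the same reduction to bounding an $R\times R$ subdeterminant by $(AB)^{-R/d}$, the same application of Heath-Brown's Lemma~3 after substituting \eqref{eq:eqsifinal} and \eqref{eq:eqtjfinal}, and the identical monomial-counting computation yielding $\xi=\frac{1}{2}(d-1)(5d+2)$ for $d\ge4$ and the explicit nine monomials giving $M^{-11}X^{-2}$ for $d=3$. The only point you leave slightly implicit that the paper makes explicit is that, in the $d=3$ case, the stated monomial ordering requires $X^{1/3}\le M\le X^{1/2}$ (so that $T_1^{-3}\le T_2^{-1}\le T_1^{-2}$), which is consistent with the lemma's range.
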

\section{Counting Points inside a box}
Observe that by Lemma \ref{lem:lem2},
\[
	\sum_{j=0}^{d-1}y_j\pi_i(\alpha^{d-1}\theta^j)=\pi_i(\alpha^{d-1}\beta)
	=m\pi_i(n+\theta)
	=\begin{cases}
		m & \text{ if } i=1\\
		0 & \text{ if } 2\leq i\leq d-1
	\end{cases}		
\]
for $1\leq i\leq d-1$. 
We ignore the equation for $i=0$ because we want the left-hand side of
our equations to have size $O_K(1)$.
We define $G_{i,j}(\mathbf{x})=\pi_i(\alpha^{d-1}\theta^j)$ for
$1\leq i\leq d-1$ and $0\leq j\leq d-1$.
Then, the $G_{i,j}(\mathbf{x})$ are forms of degree $d-1$ with integer coefficients in the variables $\mathbf{x}$. We shall write the form $C_{\hat B}$ from Lemma \ref{lem:auxeq} as $C_{\hat B}(\mathbf{x};\mathbf{y})=\sum_{j=0}^{d-1} y_j G_{d,j}(\mathbf{x})$. Then the forms $G_{d,j}(\mathbf{x})$ are linear with integer coefficients. Let $\mathbf{e}=(m,0,0,\ldots,0)\in\setZ^d$ and
define the matrix
\[
	\mathcal{G}=\mathcal{G}(\mathbf{x})
	=(G_{i,j}(\mathbf{x}))_{i=1,\ldots,d;\ j=0,\ldots,d-1}.
\]
Then $\mathcal{G}$ is a $d\times d$ matrix and we have the equation system 
\begin{equation}\label{eq:sys1}
	\mathcal{G}(\mathbf{x}).\mathbf{y}=\mathbf{e}.
\end{equation}
Our next aim is to apply a change of variables to the vector $\mathbf{x}$ in order to rewrite the condition $\mathbf{s}\in \hat B$ more conveniently. We will proceed similarly to Heath-Brown \cite{RHBsqfreen21}.
We recall that for $i=1,\ldots,d-1$ we have that
$s_i=s_{i,0}+O(1/M)$ and that $s_{i,0}=x_{i}/x_{0}$, where
$x_{0}\asymp A^{1/d}$. Thus,
\[
	|x_i-s_{i,0}x_{0}|\ll A^{1/d}/M.
\]
Next, we define the linear operator
$T:\setR^d\rightarrow\setR$ by
\[
	T(x_0,x_1,\ldots,x_{d-1})=(MA^{-1/d}(x_1-s_{1,0}x_{0}),\ldots,
		MA^{-1/d}(x_{d-1}-s_{d-1,0}x_{0}),A^{-1/d}x_0).
\]
Then
\[
	\Lambda = \{T(\mathbf{x}):\mathbf{x}\in\setZ^d\}
\]
is a lattice of determinant $\det(\Lambda)=M^{d-1}A^{-1}$.
If we define the rectangle
\[
	R=\{(v_0,\ldots,v_{d-1})\in\setR^d:|v_i|\ll 1\text{ for }
	 0\leq i\leq d-1\},
\]
where the implied constants are suitably chosen, then we are interested in counting the points falling into $\Lambda\cap R$.
By considering a basis of shortest lattice vectors in $\Lambda$, $\mathbf{g}^{(0)},\ldots, \mathbf{g}^{(d-1)}$, say, we can change the basis so that the variables $x_0,\ldots,x_{d-1}$ become $u_0,\ldots,u_{d-1}$. Furthermore, if we define $U_i$ to be a suitable constant times $|\mathbf{g}^{(i)}|^{-1}$ for $i=0,\ldots, d-1$ then $|u_i|\leq U_i\leq U_0$ for all $i$. Furthermore, the $U_i$ satisfy
\[
	\prod_{i=0}^{d-1} U_i \asymp \det(\Lambda)^{-1}=\frac{A}{M^{d-1}}.
\]
Thus, our equation system \eqref{eq:sys1} becomes
\[
	\mathcal{G}(L(\mathbf{u})).\mathbf{y}=\mathbf{e},
\]
with $|u_i|\leq U_i\leq U_0$ and $U_0^d\gg A/M^{d-1}\gg 1$, and
where $L$ is the invertible linear function such that $L(\mathbf{u})=\mathbf{x}$.\bigskip\\
Our goal is now to count the contribution from each of our boxes $\hat B$ using
the equation system \eqref{eq:sys1}. More precisely, we shall prove the following:
\begin{lemma}\label{lem:lem3}
	The number of $\mathbf{x}$ with $\mathbf{s}\in\hat B$ 
	which satisfy the equation system 
	\eqref{eq:sys1} is $O(U_0^{d-1}X^\epsilon)$.
\end{lemma}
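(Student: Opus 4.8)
The plan is to bound the number of integer vectors $\mathbf{x}$ (equivalently $\mathbf{u}$ after the lattice change of variables) with $\mathbf{s}\in\hat B$ satisfying \eqref{eq:sys1}. The key observation is that $\mathbf{y}$ is essentially determined by $\mathbf{x}$: from $\mathcal{G}(\mathbf{x}).\mathbf{y}=\mathbf{e}$ we see that once $\mathbf{x}$ is fixed with $\det\mathcal{G}(\mathbf{x})\neq 0$, the vector $\mathbf{y}$ is uniquely recovered by Cramer's rule. So it suffices to bound the number of admissible $\mathbf{x}$. First I would split into two cases according to whether $\det\mathcal{G}(L(\mathbf{u}))$ vanishes or not. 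Since $\det\mathcal{G}$ is a form of degree $d(d-1)$ in $\mathbf{u}$, the locus where it vanishes is a hypersurface, and I would treat it separately; the bulk of the argument handles $\det\mathcal{G}(L(\mathbf{u}))\neq 0$.

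On the non-vanishing locus, the idea is to exploit the extra equation $C_{\hat B}(\mathbf{x};\mathbf{y})=\sum_j y_j G_{d,j}(\mathbf{x})=0$ supplied by Lemma \ref{lem:auxeq}, which is a genuinely new constraint because $C_{\hat B}$ was built to kill all solutions with $\mathbf{s}\in\hat B$ but does not come from \eqref{eq:sys1}. Solving the first $d-1$ rows of \eqref{eq:sys1} for $\mathbf{y}$ (these have right-hand side of size $O_K(1)$) and substituting into $C_{\hat B}$, one obtains a single polynomial equation $H(\mathbf{x})=0$, where $H$ is obtained by clearing the denominator $\det$ of the $(d-1)\times(d-1)$ subsystem; concretely $H$ is a form whose coefficients are bounded by a power of $X$ (using $\Vert C_{\hat B}\Vert\ll X^\kappa$ and that the $G_{i,j}$ have integer coefficients of controlled size). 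I would then check that $H$ does not vanish identically — this is where one uses that $C_{\hat B}$ is non-zero together with the linear independence of the forms $G_{d,j}$, or equivalently that the auxiliary form genuinely cuts the solution set — so that $H(\mathbf{x})=0$ defines a proper hypersurface in $\mathbf{u}$-space. Counting lattice points in the box $|u_i|\leq U_i$ on a hypersurface of bounded degree is then a standard affine-variety estimate (of the type in Heath-Brown \cite{RHBratptsonalgvar}): the number of such points is $O(U_0^{d-1}X^\epsilon)$, since $U_0$ is the largest of the $U_i$ and the hypersurface misses one degree of freedom.

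For the degenerate locus $\det\mathcal{G}(L(\mathbf{u}))=0$, I would argue that this too is a hypersurface of degree $d(d-1)=O_d(1)$ in $\mathbf{u}$, not identically zero (its non-vanishing for generic $\mathbf{x}$ follows since for $\alpha$ a generator of $K$ the projections $\pi_i(\alpha^{d-1}\theta^j)$ are linearly independent over $\setC$), so the same lattice-point count on a hypersurface gives $O(U_0^{d-1}X^\epsilon)$ points there; and for each such $\mathbf{x}$ the equation $\mathcal{G}(\mathbf{x}).\mathbf{y}=\mathbf{e}$ has $O_K(X^\epsilon)$ solutions $\mathbf{y}$ of the relevant size (the solution set of a consistent linear system with an integrality and size constraint), which is absorbed into the $X^\epsilon$. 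Adding the two contributions gives the claimed bound $O(U_0^{d-1}X^\epsilon)$.

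The main obstacle I anticipate is verifying that the polynomial $H$ obtained after substituting $\mathbf{y}$ into $C_{\hat B}$ is not identically zero in $\mathbf{u}$: one must rule out the possibility that $C_{\hat B}$, although non-zero as an abstract bilinear form, becomes trivial once $\mathbf{y}$ is expressed in terms of $\mathbf{x}$ via the system. This requires using the precise way $C_{\hat B}$ was constructed in Lemma \ref{lem:auxeq} — namely that it is a non-trivial linear relation among the monomials $s_i t_j$ and not a consequence of the relations already forced by \eqref{eq:eqmptheta} — together with the fact that the change of variables $L$ and the lattice reduction are invertible, so that non-triviality is preserved. Once this is in hand, the rest is a routine application of the determinant-method point count together with Cramer's rule.
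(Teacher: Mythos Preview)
Your plan has a genuine gap in the non-degenerate case. You treat $C_{\hat B}=0$ as an equation \emph{external} to \eqref{eq:sys1}, to be combined with ``the first $d-1$ rows'' to produce a hypersurface $H(\mathbf{x})=0$. But $C_{\hat B}=0$ \emph{is} the $d$-th row of \eqref{eq:sys1}: the matrix $\mathcal{G}$ is built from the $d-1$ projection equations together with the linear forms $G_{d,j}$ coming from $C_{\hat B}$. The first $d-1$ rows are only $d-1$ linear equations in the $d$ unknowns $y_0,\ldots,y_{d-1}$, so they determine $\mathbf{y}$ only up to a line; adjoining the linear equation $C_{\hat B}=0$ then fixes $\mathbf{y}$ uniquely whenever $\det\mathcal{G}(\mathbf{x})\neq 0$. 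Consequently, for every such $\mathbf{x}$ the system has a (unique) rational solution $\mathbf{y}$, and there is no leftover polynomial identity $H(\mathbf{x})=0$. The obstacle you flag --- that $H$ might vanish identically --- is not a technical nuisance but the actual content: the ``substitution'' you propose is tautological. What really cuts down the count is the requirement that this rational $\mathbf{y}$ be an \emph{integer} vector, and that is a divisibility condition, not a hypersurface condition.

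The paper's argument uses precisely this integrality. Cramer's rule gives $\Delta(\mathbf{x})\,y_0=\Delta_0(\mathbf{x})$, where $\Delta=\det\mathcal{G}$ has degree $(d-1)^2+1$ (not $d(d-1)$; the last row is linear) and $\Delta_0$ has strictly smaller degree $(d-1)(d-2)+1$. After a further unimodular change $\mathbf{u}=M\mathbf{v}$ arranged so that the top power $v_0^{(d-1)^2+1}$ occurs in $\Delta$ with nonzero coefficient, one fixes $(v_1,\ldots,v_{d-1})$ --- at most $O(U_0^{d-1})$ choices --- and regards $\Delta'(v_0)\,y_0=\Delta'_0(v_0)$ as a relation between integer polynomials in $v_0$. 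Dividing out the common factor yields coprime $F,F_0$ with $\deg F\geq 1$ (from the degree comparison), and integrality of $y_0$ forces $F(v_0)\mid R$, where $R\neq 0$ is the resultant of $F$ and $F_0$, bounded by a fixed power of $X$; hence $O_\epsilon(X^\epsilon)$ values of $v_0$. This divisor/resultant step is the missing engine in your plan. A second point you skate over is the proof that $\Delta\not\equiv 0$: this is where the nonvanishing of $C_{\hat B}$ is actually used, via an explicit factorisation of $\mathcal{G}$ through the diagonal matrix of conjugates $\sigma_i(\alpha^{d-1})$ and a computation with the matrix $(\sigma_j(c_i))$, and it is not as immediate as your parenthetical suggests.
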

\begin{proof}
Let $\Delta=\Delta(\mathbf{x})$
 be the determinant of $\mathcal{G}(\mathbf{x})$ and for $i=0,\ldots,d-1$,
let $\Delta_i=\Delta_i(\mathbf{x})$ be the determinant of the matrix
that we obtain when we replace the $(i+1)$-th column of $\mathcal{G}$ by the vector $\mathbf{e}$. Then,
by Cramer's Rule we obtain the equation system
\[
	\Delta y_i=\Delta_i\qquad (i=0,\ldots,d-1).
\]
We can see that $\Delta$ is a form of degree $(d-1)^2+1$ in $\mathbf{x}$,
and that $\Delta_i$ are forms of degree $(d-1)(d-2)+1$ in $\mathbf{x}$.\bigskip\\
We proceed to show that $\Delta$ does not vanish identically.
Consider the $d\times d$ matrix 
$\mathcal{G}_1=(\sigma_i(\theta^j))_{i=1,\ldots,d;j=0,\ldots,d-1}$.
Then
\[
	\det(\mathcal{G}_1)=\prod_{1\leq i<j\leq d}
    (\sigma_i(\theta)-\sigma_j(\theta))=\Delta(\theta)\neq 0,
\]
and thus, $\mathcal{G}_1$ is invertible and we may define the linear forms $F_1,\ldots,F_{d}$ by the following vector-matrix multiplication:
\[
	(F_1,\ldots,F_{d})=(G_{d,0},\ldots,G_{d,d-1})\mathcal{G}_1^{-1}.
\]
For the purposes of showing $\Delta\not\equiv 0$, the $F_i$ may be seen as linear forms in the variables $\mathbf{a}=(\sigma_1(\alpha),\ldots,\sigma_d(\alpha))$ with coefficients in $\bar\setQ$. Furthermore, let $\mathcal{G}_2$ be the diagonal $d\times d$ matrix with diagonal entries $\sigma_1(\alpha^{d-1}),\ldots,\sigma_d(\alpha^{d-1})$. Then, we may factorize the matrix $\mathcal{G}$ as follows:
\[
	\mathcal{G}=
	\begin{pmatrix}
	\pi_1(\alpha^{d-1}) & \ldots & \pi_1(\alpha^{d-1}\theta^{d-1})\\
	\vdots &  &\vdots \\
	\pi_{d-1}(\alpha^{d-1}) & \ldots & \pi_{d-1}(\alpha^{d-1}\theta^{d-1})\\
	G_{d,0} & \ldots & G_{d,d-1}
	\end{pmatrix}
	=
	\begin{pmatrix}
	\sigma_1(c_1) & \ldots & \sigma_d(c_1) \\
	\vdots & & \vdots \\
	\sigma_1(c_{d-1}) & \ldots & \sigma_d(c_{d-1}) \\
	\frac{F_1(\mathbf{a})}{\sigma_1(\alpha^{d-1})} 
	& \ldots & \frac{F_d(\mathbf{a})}{\sigma_d(\alpha^{d-1})}
	\end{pmatrix}\mathcal{G}_2\mathcal{G}_1.
\]
Thus,
\begin{equation}\label{eq:eqdetG}
	\det(\mathcal{G})=\pm N_{K/\setQ}(\alpha^{d-1})
	\Delta(\theta)\sum_{i=1}^d\frac{F_i(\mathbf{a})}{\sigma_i(\alpha^{d-1})}(-1)^{i+1}\det(\mathcal{A}_i),
\end{equation}
where 
\[
	\mathcal{A}=(\sigma_j(c_i))_{i=0,\ldots,d-1;j=1,\ldots,d}
\]
is a $d\times d$ matrix and for $r=1,\ldots,d$, the $(d-1)\times (d-1)$ matrix $\mathcal{A}_r$ is obtained by deleting the first row and the $r$-th column of $\mathcal{A}$. Recall from Lemma \ref{lem:lemproj} that the vector
$\mathbf{c}$ is given by $\mathcal{C}\mathbf{c}=\mathbf{b}_\theta$ and
that $\det(\mathcal{C})=\Delta^2(\theta)\in\setQ\setminus\{0\}$. For
$i=1,\ldots,d$, let $\mathcal{C}_i$ be the matrix obtained by replacing
the $i$-th column of $\mathcal{C}$ by $\mathbf{b}_\theta$.
Then, by Cramer's Rule, 
\[
	\det(\mathcal{C})c_i=\det(\mathcal{C}_{i+1})\qquad (i=0,\ldots,d-1).
\]
Therefore, we can see that 
\[
	\Tr(c_i)=(\Delta(\theta))^{-2}\Tr(\det(\mathcal{C}_{i+1}))=
	\begin{cases}
		1 & \text{if } i=0\\
		0 & \text{if } 1\leq i\leq d-1.
	\end{cases}
\]
And thus, $\det(\mathcal{A}_i)+\det(\mathcal{A}_{i+1})=0$ for all 
$i=1,\ldots,d-1$. In particular,
\[
	(-1)^{i+1}\det(\mathcal{A}_{i})=\det(\mathcal{A}_1)
\]
for all $i$. The matrix $\mathcal{C}$ has entries in $\setZ$ and hence
we get $\mathcal{C}\sigma(\mathbf{c})=\sigma(\mathbf{b}_\theta)$
for any embedding $\sigma$. Hence
$\mathcal{C}\mathcal{A}=\mathcal{G}_1^T$ and therefore 
$\det(\mathcal{A})=(\Delta(\theta))^{-1}$. By expanding $\det(\mathcal{A})$
along the first row, we get
\[
	(\Delta(\theta))^{-1}=
	\sum_{i=1}^d\sigma_i(c_0)(-1)^{i+1}\det(\mathcal{A}_i)
	=\det(\mathcal{A}_1)\Tr(c_0)=\det(\mathcal{A}_1).
\]
Putting this into \eqref{eq:eqdetG}, we obtain that
\[
	\det(\mathcal{G})=\pm N_{K/\setQ}(\alpha^{d-1})
	\sum_{i=1}^d\frac{F_i(\mathbf{a})}{\sigma_i(\alpha^{d-1})}
	=\pm \sum_{i=1}^d F_i(\mathbf{a})
	\prod_{\substack{j=1\\ j\neq i}}^d a_j^{d-1},
\]
where $a_j=\sigma_j(\alpha)$. The auxiliary form $C_{\hat B}$ created by Lemma \ref{lem:auxeq} does not vanish identically. Hence, at least one of the forms $F_i$ does not vanish identically. Thus, we may assume that $F_r$, say is not identically zero. Note that
\[
	\det(\mathcal{G})\equiv \pm F_r(\mathbf{a})
	\prod_{\substack{j=1\\ j\neq r}}^d a_j^{d-1}\quad \pmod {a_r^{d-1}}.
\]
By considering the right-hand side as a polynomial in $a_r$, it cannot vanish identically because $F_r(\mathbf{a})$ is a non-zero linear form. Therefore, $\det(\mathcal{G})$ does not vanish identically.\bigskip\\
Recall that the total degree of $\Delta=\det(\mathcal{G})$ is $D=(d-1)^2+1$, say. Our next aim is to apply a further linear change of variables, $\mathbf{u}=M\mathbf{v}$, so that $v_{0}^D$ occurs with non-zero
coefficient in $\Delta$. We have shown that $\Delta$ does not vanish identically and hence we may use Theorem 1 of Heath-Brown \cite{cime}
to deduce that there exists a primitive vector $\mathbf{u_0}\in\setZ^d$
such that $\Delta(\mathbf{u_0})\neq 0$ and $|\mathbf{u_0}|\ll_d 1$. 
Let $\mathbf{u_0}$ be the first column of $M$. It is then possible to
complete $M$ to a $d\times d$ matrix having integer entries and determinant $1$. The entries of $M$ are all bounded in size in terms of $\mathbf{u_0}$ so that $\max_{i,j} |M_{i,j}|\ll_d 1$. Furthermore, $\Delta(M.(1,0,\ldots,0)^T)=\Delta(\mathbf{u_0})\neq 0$. Since $M$ is unimodular, $M^{-1}$ has entries in $\setZ$. This ensures indeed that $v_i\in\setZ$ with $v_i\ll U_0$ for all $i$. Thus, the linear change of variables $\mathbf{u}=M\mathbf{v}$ ensures that $v_0^D$ occurs with non-zero coefficient in $\Delta$ but it does not make any difference to our argument otherwise. We shall therefore consider $\Delta'=\Delta(M\mathbf{v})$
and $\Delta'_0=\Delta_0(M\mathbf{v})$ in the remaining argument.\bigskip\\
The number of vectors $\mathbf{v}$ with $\Delta'=0$ is $O_d(U_0^{d-1})$. This can be established after factorizing $\Delta'$ into
irreducible factors and by applying \cite[Theorem 1]{cime} to each factor. Thus, we may now consider the points $\mathbf{v}$
with $\Delta'(\mathbf{v})\neq 0$.\bigskip\\
There are $O(U_0^{d-1})$
choices for the points $(v_1,\ldots,v_{d-1})$. We fix one such point and
we will show that there are only $O_\epsilon(X^\epsilon)$ choices for $v_0$.  Consider $\Delta'=\Delta'(v_0)$ and $\Delta'_0=\Delta'_0(v_0)$ as polynomials in $v_0$ with integer coefficients. By dividing the polynomial equation $\Delta' y_0=\Delta'_0$ by a common factor, if necessary, we may assume that
$F(v_0)y_0=F_0(v_0)$, where $F(x)$ and $F_0(x)$ are coprime polynomials in $\setZ[x]$. Also note that the degree of $F$ must be at least 1 because the degree of $\Delta'$ in $v_0$ is strictly larger than the degree of $\Delta'_0$ in $v_0$. By the Euclidean algorithm, there exist polynomials $g_0(x),g(x)\in\setZ[x]$ such that 
\begin{equation}\label{eq:eqgg0}
	g_0(x)F_0(x)+g(x)F(x)=R,
\end{equation}
where $R$ is the resultant of $F$ and $F_0$ in the variable $v_0$, depending on $v_1,\ldots,v_{d-1}$. Note that $R\neq 0$ because $F$ and $F_0$ do not have a common root. The resultant can be defined as the determinant of the Sylvester Matrix of $F$ and $F_0$ which only depends on the coefficients of $F$ and $F_0$. Recall that these coefficients only depend on the polynomial $f$ and on the coefficients of the auxiliary equation created by Lemma \ref{lem:auxeq}. Thus, $R$ is bounded by a power of $X$.
Substituting $F(v_0)y_0=F_0(v_0)$ into \eqref{eq:eqgg0},
we get that $F(v_0)\divides R$, and hence there are only $O_\epsilon(X^\epsilon)$ choices for $v_0$.
And therefore, if we put all cases together, then there are $O(U_0^{d-1}X^\epsilon)$ choices for $(v_0,\ldots,v_{d-1})$ and each choice of $\mathbf{v}$ gives exactly one choice for $\mathbf{x}$ with $\mathbf{s}\in \hat B$. This finishes the proof of Lemma \ref{lem:lem3}.
\end{proof}
\section{Finishing the Proof}
We shall now finish the proof of Theorem \ref{thm:thm1} and Theorem \ref{thm:thm2}. In the previous section we have shown that the number of $\mathbf{x}$ with $\mathbf{s}\in\hat B$ that satisfy \eqref{eq:sys1} is $O(U_0^{d-1}X^\epsilon)$. 
It thus remains to sum up the contributions from all boxes.
We write the shortest non-zero lattice vector $\mathbf{g}^{(0)}$ in $\Lambda$ from the previous section as
\[
	\mathbf{g}^{(0)}=(MA^{-1/d}(x_1-s_{1,0}x_{0}),\ldots,
		MA^{-1/d}(x_{d-1}-s_{d-1,0}x_{0}),A^{-1/d}x_0).
\]
Recall further that $U_0$ was defined to be a suitable constant times
$|\mathbf{g}^{(0)}|^{-1}$. Thus,
\begin{align*}
	&U_0|x_i-s_{i,0}x_0|\ll A^{1/d}M^{-1} \qquad (i=1,\ldots,d-1),\\
	&U_0|x_0|\ll A^{1/d}.
\end{align*}
We produce the boxes $\hat B$ by taking $s_{i,0}=\frac{z_i}{M}$, where $z_i$ is an integer of exact order $M$. Hence, the number of boxes $\hat B$ for which $U_0\sim U$ is at most the number of $(2d-1)$-tuples
$(x_0,\ldots,x_{d-1},z_1,\ldots,z_{d-1})$ for which
\[
	z_ix_0 = Mx_i + O\left(\frac{A^{1/d}}{U}\right),\qquad
	x_0\ll\frac{A^{1/d}}{U},\qquad z_i\asymp M,
\]
where $i=1,\ldots,d-1$. Recall that by asumption on $M$, we have that
$U^d\gg A/M^{d-1}\gg 1$. For $i=1,\ldots,d-1$, observe that
\[
	x_i\ll x_0 +O(M^{-1/d}).
\]
Hence, if $x_0=0$ then $\mathbf{g}^{(0)}=\mathbf{0}$ which is impossible. 
So $|x_0|\geq 1$ and therefore $A^{1/d}U^{-1}\gg 1$. We may therefore assume that $x_0\neq 0$ and $x_i\ll x_0$ for all $i$. There are
$O(A^{1/d}U^{-1})$ choices for $x_0$.
Fix one such $x_0$.
For $i=1,\ldots,d-1$, there exist integers $r_i\ll A^{1/d}U^{-1}$ such that
\begin{equation}\label{eq:linDio}
	z_ix_0=Mx_i+r_i.
\end{equation}
Thus, there are $O\left((A^{1/d}U^{-1})^{d-1}\right)$
choices for $r_1,\ldots,r_{d-1}$, since $A^{1/d}{U}^{-1}\gg 1$. The congruences
\[
	Mx_i\equiv -r_i\ \pmod{x_0}
\]
determine $x_1,\ldots,x_{d-1}$ modulo $x_0$, provided $x_0$ and $M$ are coprime.
Indeed,
\[
	1\leq |x_0|\ll\frac{A^{1/d}}{U}\ll M^{1-1/d}<M,
\]
if $X$ and hence $M$ are large enough. Furthermore, we may pick $M$ to be a prime in Lemma \ref{lem:auxeq}, which then indeed insures that $M$ and
$x_0$ are coprime. Now, $x_i$, $x_0$ and $r_i$ determine $z_i$ by \eqref{eq:linDio}.
Thus, the number of intervals for which
$U_0\sim U$ is  $O\left(\frac{A}{U^d}\right)$.
By Lemma \ref{lem:lem3}, each of these intervals contributes at most $O(U^{d-1}X^\epsilon)$ choices
for $\mathbf{x}$. Thus, altogether there are 
\[
	\ll \frac{A}{U^d}\cdot  U^{d-1} X^{\epsilon}
	= \frac{A}{U}X^{\epsilon}\ll X^{\epsilon}(AM)^{1-1/d}
\] 
choices for $\mathbf{x}$. Each such $\mathbf{x}$ determines $\alpha$ and hence, each $\mathbf{x}$ gives $O(1)$ choices for $a_2$ by Lemma \ref{lem:lem2}. Note that there are only $O_K(1)$ values for $a_1$ such that
$a_1\divides\Delta^2_K$. Hence, 
$\#\{a:(n,a,b) \text{ is counted by }\mathcal{N}(X;A,B)\}\ll X^{\epsilon}(AM)^{1-1/d}$.
Thus, by a trivial estimate, we deduce that for each $\epsilon,\epsilon'>0$,
\begin{align*}
	\mathcal{N}(X;A,B)&\ll\sum_{a}
	\#\{n\sim X: f(n)\equiv 0\ \pmod{a^{d-1}}\}
	\ll\sum_a\rho(a^{d-1})\left(\frac{X}{a^{d-1}}+1\right)\\
	&\ll X^{\epsilon}\sum_a 1\ll X^{2\epsilon}(AM)^{1-1/d}
	\ll M^{1-1/d}X^{1-1/d+\epsilon'}.
\end{align*}
Thus, it suffices if there exists a $\eta>0$ such that $M^{d-1}\ll X^{1-\eta}$.
By Lemma \ref{lem:auxeq}, we can indeed pick such an $M$ because
$\frac{4d}{5d+2}<\frac{4}{5}$ for $d\geq 4$. If $d=3$ then $M$ is essentially $X^{4/11}$ and $8/11<1$. This finishes the proof of Theorem \ref{thm:thm1} and \ref{thm:thm2}.


\end{document}